\PassOptionsToPackage{dvipsnames,svgnames,x11names}{xcolor}

\documentclass[12pt,reqno]{amsart}

\usepackage{amsmath}
\usepackage{amssymb}
\usepackage{amsthm}
\usepackage{bbm}
\usepackage{mathtools}
\usepackage{nicefrac}

\usepackage[main=english,brazilian]{babel}
\usepackage[utf8]{inputenc} 

\usepackage{a4wide}
\usepackage{microtype}

\usepackage[all]{xy}
\usepackage{tikz-cd}

\usepackage{aligned-overset}
\usepackage{enumitem}
\usepackage{multicol}
\usepackage{scalefnt}
\usepackage{xspace}

\usepackage{fixme}
\usepackage{url}

\usepackage{xcolor}
\usepackage{hyperref}
\usepackage{cleveref} 

\newtheorem{theorem}{Theorem}[section]
\newtheorem{proposition}[theorem]{Proposition}
\newtheorem{lemma}[theorem]{Lemma}
\newtheorem{corollary}[theorem]{Corollary}
\theoremstyle{definition}
\newtheorem{definition}[theorem]{Definition}
\newtheorem{remark}[theorem]{Remark}
\newtheorem{example}[theorem]{Example}

\newcommand*{\C}{\mathbb C}
\newcommand*{\Z}{\mathbb Z}
\newcommand*{\Star}{$^*$\nobreakdash}
\newcommand*{\id}{\operatorname{id}}
\DeclareMathOperator{\corr}{Corr}
\DeclareMathOperator{\rep}{Rep}
\DeclareMathOperator{\ev}{\text{ev}}		

\newcommand*{\aA}{\mathcal A}			
\newcommand*{\aB}{\mathcal B} 			
\newcommand*{\aC}{\mathcal C} 	
\newcommand*{\aD}{\mathcal D} 	

\title{Reduction of structure groups for \\ noncommutative principal bundles}
\author{Stefan Wagner}
\address{Blekinge Institute of Technology, Department of Mathematics,
Karlskrona, Sweden}
\email{stefan.wagner@bth.se}
\subjclass[2020]{Primary 46L55, 46L85; Secondary 46L08, 18M05}
\keywords{Noncommutative principal bundle, reduction of structure group,
free C\Star-dynamical system, induced C\Star-algebra, tensor functor,
Tannaka--Krein reconstruction}

\begin{document}

\begin{abstract}
Let $G$ be a compact group and let $H \subseteq G$ be a closed subgroup.
We study reduction of the structure group for noncommutative principal bundles, modelled by free C\Star-dynamical systems in the sense of Ellwood.
We first describe how induction from $H$ to~$G$ behaves on associated correspondences and their tensor structure.
Using reconstruction, this leads to a Tannakian characterization of reduction of the structure group in terms of the associated tensor functor.
For normal subgroups, we characterize reduction in terms of central equivariant homomorphisms and derive module- and $K$-theoretic obstructions.
We conclude with a noncommutative $O(n)$-example which admits no reduction to $SO(n)$.
\end{abstract}

\maketitle

\section{Introduction}
\label{sec:intro}

Let $G$ be a compact group, let $X$ be a compact Hausdorff space, and let $P \to X$ be a locally trivial principal $G$-bundle. 
If $H \subseteq G$ is a closed subgroup, a \emph{reduction of the structure group} from $G$ to $H$ is a principal $H$-subbundle $Q \subseteq P$. 
Such a reduction is equivalently characterized by a principal $H$-bundle $Q \to X$ together with an isomorphism of principal $G$-bundles
\[
    P \cong Q \times_H G.
\]
Indeed, a reduction $Q \subseteq P$ induces the canonical isomorphism $Q \times_H G \to P$, $[q,g] \mapsto qg$.
Thus reduction of the structure group may be viewed as an inverse problem for extension of the structure group.

Reductions of structure groups are ubiquitous in geometry and mathematical physics. 
They encode the presence of additional geometric structure on a bundle and are often subject to topological obstructions. 
A basic example, for $n \ge 1$, is provided by the inclusion 
\[
    SO(n) \subseteq O(n).
\]
A reduction of the orthonormal frame bundle of a Riemannian $n$-manifold from $O(n)$ to $SO(n)$ is equivalent to a choice of orientation. 
More generally, many geometric structures can be described in terms of reductions of frame bundles. 
In gauge theory, reductions also arise naturally in symmetry breaking, where the original symmetry group $G$ is reduced to a subgroup $H$.

A classical criterion for the existence of a reduction is obtained from the homogeneous space $G/H$. 
The quotient bundle $P/H \to X$ is a fibre bundle with typical fibre $G/H$.
The principal $G$-bundle $P \to X$ admits a reduction to $H$ if and only if this quotient bundle admits a global section $s: X \to P/H$.
Indeed, such a section selects continuously an $H$-orbit in each fibre of $P$, and the union of these orbits forms a principal $H$-subbundle of~$P$. 
Conversely, every reduction $Q\subseteq P$ determines a section by $s(x) := qH$, $q\in Q_x$.
This is well defined because $Q_x$ is an $H$-orbit.

The quotient bundle is canonically isomorphic to the associated bundle $P \times_G (G/H)$.
Thus the existence of a reduction can equivalently be formulated as the existence of a global section of the associated bundle with homogeneous fibre $G/H$.

The situation simplifies further when $H$ is normal in $G$. 
In this case $G/H$ is itself a compact group and $P/H \to X$ is a principal $G/H$-bundle. 
Consequently, a reduction from $G$ to $H$ exists if and only if this quotient bundle is trivial. 
For
example,
\[
    SO(n) \triangleleft O(n),
    \qquad
    O(n)/SO(n) \cong \mathbb{Z}_2.
\]
If $P \to X$ is the orthonormal frame bundle of a Riemannian $n$-manifold $X$, then $X$ is orientable if and only if the associated principal $\mathbb{Z}_2$-bundle $P/SO(n) \to X$ is trivial.

A further description of reduction, particularly well suited to the noncommutative setting, is obtained from associated vector bundles. 
Let $\sigma: G \to \mathcal{U}(V_\sigma)$ be a finite-dimensional unitary representation. 
The principal $G$-bundle $P$ then determines the associated vector bundle $P \times_G V_\sigma \to X$.
If $Q \subseteq P$ is a reduction to $H$, or equivalently if $P \cong Q \times_H G$, then there is a canonical isomorphism of vector bundles over $X$,
\[
    P \times_G V_\sigma
    \cong
    Q \times_H V_{\sigma\vert_H},
\]
where $\sigma\vert_H$ denotes the restriction of $\sigma$ to $H$. 
These isomorphisms are natural in $\sigma$ and compatible with tensor products. 

Denoting by $\rep(G)$ the C\Star-tensor category of finite-dimensional unitary representations of $G$, with intertwiners as morphisms, the
associated-vector-bundle construction for $P$ therefore factors through the restriction functor
\[
    \operatorname{Res}^G_H: \rep(G) \to \rep(H),
    \qquad
    \sigma \mapsto \sigma\vert_H.
\]

Noncommutative principal bundles lie at the intersection of several central themes in noncommutative geometry. 
From a geometric perspective, they extend the classical theory of principal and associated bundles to noncommutative spaces. 
In operator algebras and functional analysis, they arise naturally from free actions on C\Star-algebras and connect fixed point algebras, Hilbert C\Star-modules, correspondences, and $K$-theoretic constructions.
They also provide a natural setting for quantum symmetries, connections, and noncommutative gauge theories in mathematical physics. 
These different aspects have led to algebraic, C\Star-algebraic, and gauge-theoretic approaches to the study of noncommutative principal
bundles; see, for example,~\cite{BaCoHa15,Cac23,CacMes19,Du95,FaLa24,LaSu05,SchWa17,SchWa20,Wa25,Wa23} and the references therein.

The purpose of this article is to study reduction of the structure group in this setting. 
We model noncommutative principal bundles by free C\Star-dynamical systems in the sense of Ellwood.

Let $\aA$ be a unital C\Star-algebra and let $G$ be a compact group.
In this paper, a strongly continuous action $\alpha: G \to \operatorname{Aut}(\aA)$, $g \mapsto \alpha_g$ will be referred to as a \emph{C\Star-dynamical system} and denoted by $(\aA,G,\alpha)$. 

A C\Star-dynamical system $(\aA,G,\alpha)$ is called \emph{free} if the Ellwood map
\[
    \Phi:
    \aA \otimes_{\mathrm{alg}} \aA \to C(G,\aA),
    \qquad
    \Phi(x \otimes y)(g) := x \alpha_g(y),
\]
has dense range with respect to the canonical C\Star-norm on $C(G,\aA)$. 
This condition was introduced by Ellwood in the setting of compact quantum group actions~\cite{Ell00} and, for classical compact group actions, is equivalent to Rieffel's saturatedness condition~\cite{Rieffel91} and to the Peter--Weyl--Galois condition~\cite{BaCoHa15}.

This notion agrees with classical freeness.
Indeed, if a compact group $G$ acts on a compact space $P$, then the action is free if and only if the induced C\Star-dynamical system $(C(P),G,\alpha)$ is free in the above sense; see \cite[Prop.~7.1.12 and Thm.~7.2.6]{Phi87}. 
Free C\Star-dynamical systems therefore provide a natural framework for noncommutative principal bundles.

A free C\Star-dynamical system $(\aA,G,\alpha)$ with fixed point algebra $\aB$ determines a unitary tensor functor
\[
    \Gamma_\aA: \rep(G) \to \corr(\aB),
\]
where $\corr(\aB)$ denotes the C\Star-tensor category of correspondences over $\aB$. 
The functor is constructed from the associated correspondences of the action. 
Its construction, together with the corresponding reconstruction result, is recalled in Section~\ref{sec:induction}.

Now let $H \subseteq G$ be a closed subgroup and let $(\aC,H,\gamma)$ be a free C\Star-dynamical system with fixed point algebra $\aB$. 
The noncommutative analogue of extension of the structure group is given by the induced C\Star-algebra
\begin{equation}
\label{eq:ind}
    \operatorname{Ind}_H^G(\aC) := C(G,\aC)^H,
\end{equation}
where
\[
    C(G,\aC)^H
    :=
    \{
        f \in C(G,\aC):
        (\forall g \in G)\;
        (\forall h \in H)\;
        f(h^{-1}g) = \gamma_{h^{-1}}(f(g))
    \}.
\]
The group $G$ acts on $\operatorname{Ind}_H^G(\aC)$ by right translation,
\[
    (\beta_kf)(g) := f(gk),
    \qquad g, k\in G.
\]

A key observation is that induction has a particularly simple description on the level of associated correspondences. 
For each $\sigma \in \rep(G)$, evaluation at the identity induces a canonical
unitary isomorphism
\[
    \Gamma_{\operatorname{Ind}_H^G(\aC)}(\sigma)
    \cong
    \Gamma_\aC(\sigma\vert_H).
\]
These isomorphisms are natural in $\sigma$ and compatible with tensor products, and therefore give rise to a unitary monoidal natural
isomorphism
\[
    \Gamma_{\operatorname{Ind}_H^G(\aC)}
    \cong
    \Gamma_\aC \circ \operatorname{Res}^G_H.
\]
Thus the classical relation between associated vector bundles has a direct noncommutative counterpart at the level of associated correspondences.

One of the main results of this article shows that this relation characterizes reduction: a free C\Star-dynamical system $(\aA,G,\alpha)$ admits a reduction to $(\aC,H,\gamma)$ precisely when its associated tensor functor factors, up to unitary monoidal natural isomorphism, through the restriction functor
\[
    \operatorname{Res}^G_H: \rep(G) \to \rep(H).
\]
Thus reduction of the structure group can be characterized entirely in terms of the associated tensor-functorial data.

For a closed normal subgroup $H \trianglelefteq G$, a further characterization is obtained in terms of the center of the total algebra.
More precisely, a reduction to $H$ exists if and only if there is a unital $G$-equivariant \Star-homomorphism
\[
    j: C(G/H) \to Z(\aA),
\]
where $G$ acts on $C(G/H)$ through the quotient by right translation.
Equivalently, the residual $G/H$-system admits a trivialization whose canonical copy of $C(G/H)$ is central in $\aA$.
In particular, reduction implies triviality of the residual system, which yields module- and $K$-theoretic obstructions.

\subsection*{Organization of the paper}

Section~2 studies induction from a closed subgroup $H \subseteq G$.
After recalling the associated tensor functor of a free C\Star-dynamical system, freeness of the induced C\Star-dynamical system is recalled,
and Theorem~\ref{thm:induction-tensor-functor} shows that evaluation
at the identity identifies its associated tensor functor with that of
the original $H$-C\Star-dynamical system composed with restriction of representations.

Section~3 introduces reduction of the structure group.
A rigidity argument identifies~reductions with induced C\Star-dynamical systems, and Theorem~\ref{thm:reduction-tannakian} gives the main tensor-functorial characterization of reduction.
For normal subgroups, Theorem~\ref{thm:normal-reduction} characterizes reduction through central equivariant homomorphisms and shows that the residual $G/H$-system is trivial whenever a reduction exists.
This leads to module- and $K$-theoretic obstructions and to a noncommutative $O(n)$-example admitting no reduction to $SO(n)$.

The paper concludes with open problems concerning noncommutative principal $O(n)$-bundles and bundles with homogeneous fibres.

\section{Induction and associated correspondences}
\label{sec:induction}

We begin by recalling the tensor functor associated with a free C\Star-dynamical system. 
Let $(\aA,G,\alpha)$ be a free C\Star-dynamical system with fixed point algebra
$\aB := \aA^G$. 
Denote by $\corr(\aB)$ the C\Star-tensor category of correspondences over $\aB$, with adjointable $\aB$-bimodule maps as morphisms and the interior tensor product over $\aB$ as tensor product.

Let $\sigma\in\rep(G)$. 
Define
\[
    \Gamma_\aA(\sigma)
    :=
    \{
        x \in \aA \otimes V_\sigma:
        (\forall g \in G)\;
        (\alpha_g \otimes \sigma_g)(x) = x
    \}.
\]
The space $\Gamma_\aA(\sigma)$ is naturally a correspondence over $\aB$. 
Its left and right $\aB$-actions are induced by multiplication in $\aA$, while the $\aB$-valued inner product is obtained by
restricting the canonical right $\aA$-valued inner product on
$\aA \otimes V_\sigma$, determined on elementary tensors by
\[
    \langle a \otimes v,b \otimes w\rangle_\aA
    :=
    \langle v,w\rangle_{V_\sigma} a^*b
\]
for all $a,b \in \aA$ and $v,w \in V_\sigma$. 
Indeed, for $x,y \in \Gamma_\aA(\sigma)$, the element $\langle x,y\rangle_\aA$ is $G$-invariant and therefore belongs to $\aB$.

The assignment $\sigma \mapsto \Gamma_\aA(\sigma)$ extends to a unitary tensor functor
\[
    \Gamma_\aA: \rep(G) \to \corr(\aB).
\]
On morphisms, the functor is induced by the corresponding intertwiners of representations, while its tensor structure is determined by multiplication in $\aA$. 
Conversely, free C\Star-dynamical systems with structure group $G$ and fixed point algebra $\aB$ can be reconstructed, up to $G$-equivariant \Star-isomorphism over $\aB$, from the corresponding tensor-functorial data; see \cite[Sec.~5]{SchWa17} and also~\cite{Du95}. 
This reconstruction principle will be used below in the characterization of reduction of the structure group.

We now turn to induction.
Throughout this section, let $H \subseteq G$ be a closed subgroup and let $(\aC,H,\gamma)$ be a free C\Star-dynamical system with fixed point algebra $\aB := \aC^H$.
The induced C\Star-algebra
\[
    \aD := \operatorname{Ind}_H^G(\aC),
\]
defined in Equation~\ref{eq:ind}, is equipped with the right translation
action $\beta$ of $G$.
The resulting C\Star-dynamical system will be denoted by $(\aD,G,\beta)$.

The following standard result shows that freeness is preserved under induction:

\begin{proposition}
\label{prop:induction-free}
The C\Star-dynamical system $(\aD,G,\beta)$ is free and its fixed point algebra is canonically identified with $\aB$.
\end{proposition}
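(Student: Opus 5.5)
The plan is to handle the fixed point algebra first and then prove freeness by approximating functions in $C(G,\aD)$ through the freeness of $(\aC,H,\gamma)$.

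\emph{Fixed point algebra.} If $f \in \aD$ is $\beta$-invariant, then $f(gk)=f(g)$ for all $g,k$, so $f$ is constant, say with value $c\in\aC$. The covariance condition then reads $c=\gamma_{h^{-1}}(c)$ for all $h\in H$, so $c\in\aB$. Conversely, every constant function with value in $\aB$ lies in $\aD^G$. Hence evaluation at the identity, $\ev_e:\aD^G\to\aB$, is a \Star-isomorphism, with inverse $b\mapsto$ the constant function $b$.

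\emph{Freeness.} Freeness of $(\aD,G,\beta)$ means that sums of functions $k\mapsto x\beta_k(y)$, with $x,y\in\aD$, are dense in $C(G,\aD)$. Identify $C(G,\aD)$ with the subalgebra of $C(G\times G,\aC)$ of functions $F(g,k)$ satisfying $F(h^{-1}g,k)=\gamma_{h^{-1}}(F(g,k))$. Under this identification the Ellwood map sends $x\otimes y$ to the function $(g,k)\mapsto x(g)\,y(gk)$. The substitution $(g,k)\mapsto(g,gk)$ turns this into $(g,k')\mapsto x(g)y(k')$, with covariance in both variables simultaneously. So the task is to show that the span of the products $x(g)y(k')$, with $x,y\in\aD$, is dense in
\[
\{F\in C(G\times G,\aC): F(h^{-1}g,h^{-1}k')=\gamma_{h^{-1}}(F(g,k'))\},
\]
that is, in the algebra induced from $C(H\backslash\!\backslash(G\times G))$-type data for the diagonal $H$-action.

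This is a module-level version of the statement that $C(G\times G,\aC)^{H}$ is generated by $\aD\cdot\aD$. I would prove it in three steps.

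\begin{enumerate}
\item Use the freeness of $\gamma$, in the form of Rieffel saturation or a Peter--Weyl--Galois decomposition: there are finitely many $u_i,v_i\in\aC$ with $\sum_i u_i\gamma_h(v_i)$ uniformly close to a prescribed continuous function of $h$, in particular to the constant $1$.
\item Use local cross sections, or alternatively a partition of unity on $H\backslash G$ together with averaging $\int_H$, to produce elements of $\aD$ from functions in $C(G,\aC)$. Concretely, the averaging map is
\[
(Pf)(g)=\int_H \gamma_{h}(f(hg))\,dh ,
\]
which is a surjection $C(G,\aC)\to\aD$.
\item Write a given diagonally covariant $F$, approximately, as an $H$-average of products $f_1(g)f_2(k')$ with $f_i\in C(G,\aC)$; this is possible by Stone--Weierstrass on $G\times G$ with tensor-product approximations. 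Then use the approximate identity $\sum_i u_i\gamma_h(v_i)\approx 1$ from Step 1 to split the double average $\int_H$ over the diagonal into a product of two separate averages $P(\cdot)$, one in each variable.
\end{enumerate}

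The last step is the main obstacle. The diagonal $H$-average does not factor into the product of two single averages, and the freeness of $\gamma$ is exactly what is needed to insert the right correction terms $u_i$ and $v_i$. One natural choice is to take $x = P(f_1 u_i)$ and $y = P(v_i f_2)$, after a suitable translation. Alternatively, I would cite the standard result (e.g.\ Phillips, or \cite{SchWa17}) that induction preserves saturatedness, and reduce to the equivalent characterization of freeness via surjectivity of $\Gamma$-inner products: $\langle\Gamma_\aD(\sigma),\Gamma_\aD(\sigma)\rangle$ should be dense in $\aB$ for every $\sigma\in\rep(G)$, and this can be verified through $\ev_e$ and the freeness of $\aC$.
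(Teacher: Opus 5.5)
Your computation of the fixed point algebra is the same as the paper's. For freeness the paper only cites \cite[Thm.~3.21]{SchWa17}, so your citation fallback is exactly the paper's proof. Neither of your self-contained routes works as written, though.

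\textbf{The direct route.} First a slip: $\int_H\gamma_h(f(hg))\,dh$ does not in general land in $\aD$. The projection onto $\aD$ is $(Pf)(g)=\int_H\gamma_h(f(h^{-1}g))\,dh$. The real problem is Step~3. With this $P$,
\[
\sum_i P(f_1u_i)(g)\,P(v_if_2)(k')
=\int_H\int_H\gamma_{h_1}\bigl(f_1(h_1^{-1}g)\bigr)\,
\gamma_{h_1}\Bigl(\sum_i u_i\gamma_{h_1^{-1}h_2}(v_i)\Bigr)\,
\gamma_{h_2}\bigl(f_2(h_2^{-1}k')\bigr)\,dh_1\,dh_2 .
\]
So inserting $\sum_iu_i\gamma_h(v_i)\approx 1$ only yields $P(f_1)(g)\,P(f_2)(k')$. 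This is a product of two \emph{independent} averages. It is already of the form $x(g)y(k')$ without any $u_i,v_i$, and it is not the diagonal average $\int_H\gamma_h(f_1(h^{-1}g))\,\gamma_h(f_2(h^{-1}k'))\,dh$ that you need.

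The missing idea is to use Ellwood density for $(\aC,H,\gamma)$ to approximate, within $\delta$ in $C(H,\aC)$, the function $h\mapsto\phi(h)1_\aC$. Here $\phi\geq 0$ is supported in a small neighbourhood of $e$ and $\int_H\phi=1$. The double integral then agrees with the one having kernel $\phi(h_1^{-1}h_2)$ up to $\|f_1\|\,\|f_2\|\,\delta$. That integral tends to the diagonal average as the support of $\phi$ shrinks, by uniform continuity of $(t,g)\mapsto\gamma_t(f_2(t^{-1}g))$ on $H\times G$. Choosing $\phi$ first and then $\delta$ closes Step~3, and with your Steps~1--2 this gives a complete elementary proof.

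\textbf{The criterion in your last sentence.} Fullness of $\langle\Gamma(\sigma),\Gamma(\sigma)\rangle$ in the fixed point algebra for all $\sigma$ is necessary for freeness, but for nonabelian $G$ it does not imply it. Take $SO(3)$ acting on $C(S^2)$ by rotations. The fixed point algebra is $\C$, and each $\Gamma(\sigma_l)$ is one-dimensional, spanned by the equivariant map $x$ determined by a weight-zero vector. Then $\langle x,x\rangle=\|x(p)\|^2$ is a nonzero constant, so every $\Gamma(\sigma)$ is full, yet the action is not free.

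Reading the Ellwood condition off one isotypic component at a time, the correct criterion is this: the elements $(a\otimes 1)x$, with $a\in\aA$ and $x\in\Gamma_\aA(\sigma)$, span a dense subspace of $\aA\otimes V_\sigma$ for every $\sigma$. With this criterion your $\ev_e$ idea does go through:
\begin{enumerate}
\item For $\Psi\in\aD\otimes V_\sigma$ put $\tilde\Psi(g):=(\id\otimes\sigma_g)\Psi(g)$. Then $\tilde\Psi(h^{-1}g)=(\gamma_{h^{-1}}\otimes\sigma_{h^{-1}})\tilde\Psi(g)$.
\item Freeness of $\aC$ gives, uniformly in $g$, $\tilde\Psi(g)\approx\sum_l(e_l(g)\otimes 1)x_l$ with $e_l\in C(G,\aC)$ and $x_l\in\Gamma_\aC(\sigma\vert_H)$.
\item The contractive averaging $\Xi\mapsto\int_H(\gamma_h\otimes\sigma_h)\Xi(h^{-1}\,\cdot\,)\,dh$ fixes $\tilde\Psi$ and turns $e_l$ into $Pe_l\in\aD$.
\item Hence $\Psi\approx\sum_l(Pe_l\otimes 1)F_{x_l}$, where $F_{x_l}(g)=(\id\otimes\sigma_g^{-1})x_l\in\Gamma_\aD(\sigma)$.
\end{enumerate}
As the proposal stands, freeness is supported only by the citation.
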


\begin{proof}
Freeness follows from the induction result~\cite[Thm.~3.21]{SchWa17}. 
It remains to identify the fixed point algebra.

Suppose that $f \in \aD$ is fixed by $\beta$. 
Then $f(gk)=f(g)$ for all $g,k \in G$, and hence $f$ is constant, say $f(g)=c_0$. 
The defining covariance relation for $\aD$ gives
\[
    c_0 = f(h^{-1}) = \gamma_{h^{-1}}(f(e)) = \gamma_{h^{-1}}(c_0)
\]
for all $h \in H$. 
It follows that $c_0 \in \aC^H = \aB$. 
Conversely, each $b \in \aB$ determines a constant function in $\aD$ which is fixed by $\beta$. 
Therefore $\aD^G = \aB$.
\end{proof}

The associated correspondences of $(\aD,G,\beta)$ can now be described explicitly:

\begin{theorem}
\label{thm:induction-tensor-functor}
For each $\sigma \in \rep(G)$, evaluation at the identity element $e \in G$ defines a unitary isomorphism of $\aB$-correspondences
\[
    \ev_e^\sigma: \Gamma_\aD(\sigma) \to \Gamma_\aC(\sigma\vert_H),
    \qquad
    F \mapsto F(e).
\]
These isomorphisms assemble into a unitary monoidal natural isomorphism
\[
    \varepsilon: \Gamma_\aD \Longrightarrow \Gamma_\aC \circ \operatorname{Res}^G_H,
\]
whose component at $\sigma$ is $\varepsilon_\sigma=\ev_e^\sigma$.
\end{theorem}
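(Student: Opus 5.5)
We identify $\aD\otimes V_\sigma$ with a subspace of $C(G,\aC\otimes V_\sigma)$, so that elements $F\in\Gamma_\aD(\sigma)$ are continuous functions $F:G\to\aC\otimes V_\sigma$ subject to two conditions. The first is the $H$-covariance $F(h^{-1}g)=(\gamma_{h^{-1}}\otimes\id)(F(g))$. The second is the $\beta\otimes\sigma$-invariance $(\id\otimes\sigma_k)(F(gk))=F(g)$ for all $g,k\in G$. The invariance relation, taken at $g=e$, gives the reconstruction formula $F(k)=(\id\otimes\sigma_{k}^{-1})(F(e))$, so $F$ is determined by $F(e)$. Combining this with covariance at $g=e$ gives
\[
(\gamma_{h^{-1}}\otimes\id)(F(e))=F(h^{-1})=(\id\otimes\sigma_h)(F(e)).
\]
This says exactly that $F(e)$ is fixed by $\gamma_h\otimes\sigma_h$ for all $h\in H$, i.e.\ $F(e)\in\Gamma_\aC(\sigma\vert_H)$. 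Hence $\ev_e^\sigma$ is well defined and injective.

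For surjectivity, given $x\in\Gamma_\aC(\sigma\vert_H)$ we define $F_x(g):=(\id\otimes\sigma_g^{-1})(x)$. This function is continuous because $\sigma$ is finite-dimensional, and it is $\beta\otimes\sigma$-invariant by construction. Its $H$-covariance follows from the $H$-invariance of $x$, by reversing the computation above. We then check that $\ev_e^\sigma$ is a map of $\aB$-correspondences. The actions are compatible because $\aB\subseteq\aD$ sits as constant functions (Proposition~\ref{prop:induction-free}), and evaluation at $e$ is a homomorphism. For the inner product, $\langle F,F'\rangle_\aD$ is the function $g\mapsto\langle F(g),F'(g)\rangle_\aC$. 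This function lies in $\aB$ and is therefore constant, so it equals its value at $e$, which is $\langle F(e),F'(e)\rangle_\aC$. Thus $\ev_e^\sigma$ is an isometric bimodule bijection, hence a unitary.

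Naturality is immediate. For an intertwiner $T:\sigma\to\tau$, the functors act by $\id\otimes T$ on both sides, and evaluation at $e$ commutes with applying $\id\otimes T$ pointwise.

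For monoidality, the tensor structure of $\Gamma_\aD$ is the map $\Gamma_\aD(\sigma)\otimes_\aB\Gamma_\aD(\tau)\to\Gamma_\aD(\sigma\otimes\tau)$ induced by multiplication in $\aD\otimes V_\sigma\otimes V_\tau$, so that $F\otimes F'\mapsto F_{12}F'_{13}$. Multiplication in $\aD$ is pointwise, so evaluating at $e$ gives $F(e)_{12}F'(e)_{13}$, which is precisely the tensor structure of $\Gamma_\aC\circ\operatorname{Res}^G_H$ applied to $\ev_e^\sigma\otimes\ev_e^\tau$. The unit constraint is also compatible: for the trivial representation both sides are $\aB$, and $\ev_e$ is the identity there.

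The main obstacle is bookkeeping rather than substance. We need to fix the conventions identifying $\aD\otimes V_\sigma$ with functions valued in $\aC\otimes V_\sigma$, and to track signs of $h$ versus $h^{-1}$ in the covariance condition. We also need to ensure that $\ev_e^\sigma\otimes\ev_e^\tau$ descends to the balanced tensor product over $\aB$, which holds because $\ev_e$ is $\aB$-bilinear.
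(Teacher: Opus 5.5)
Your proposal is correct and follows essentially the same route as the paper. Both reconstruct $F$ from $F(e)$ via $\beta\otimes\sigma$-invariance, combine this with the covariance relation to get $F(e)\in\Gamma_\aC(\sigma\vert_H)$, invert by $x\mapsto F_x$, and check bilinearity, the inner product, naturality and the tensor structure by evaluating pointwise at $e$. Your explicit inner-product argument (the $\aD$-valued inner product is $G$-invariant, hence constant) and your remarks on the unit constraint and the balanced tensor product only spell out what the paper leaves implicit; the one quibble is that continuity of $F_x$ comes from continuity of $\sigma$, not from finite-dimensionality as such.
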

\begin{proof}
Let $\sigma \in \rep(G)$.
Given $F \in \Gamma_\aD(\sigma)$, the inclusion $\aD \otimes V_\sigma \subseteq C(G,\aC \otimes V_\sigma)$ allows $F$ to be regarded as a continuous function $F: G \to \aC \otimes V_\sigma$ whose $G$-invariance is expressed by $(\id_\aC \otimes \sigma_k)F(gk) = F(g)$ for all $g,k \in G$.
Hence $F(g) = (\id_\aC \otimes \sigma_g^{-1})F(e)$ for all $g \in G$, and so $F$ is completely determined by its value at the identity.

Set $x := F(e)$. 
The covariance relation defining $\aD$ implies that $F(h) = (\gamma_h \otimes \id_{V_\sigma})x$ for all $h \in H$.
On the other hand, $F(h) = (\id_\aC \otimes \sigma_{h^{-1}})x$ for all $h \in H$.
Therefore $(\gamma_h \otimes \sigma_h)x = x$ for all $h \in H$, showing that $x \in \Gamma_\aC(\sigma\vert_H)$.
Consequently, $\ev_e^\sigma$ is well defined.

Conversely, let $x \in \Gamma_\aC(\sigma\vert_H)$.
Define
\[
    F_x(g) := (\id_\aC \otimes \sigma_g^{-1})x,
    \qquad g \in G.
\]
The corresponding map $F_x: G \to \aC \otimes V_\sigma$ is continuous by continuity of $\sigma$. 
The $H$-invariance of $x$ gives $F_x(h^{-1}g) = (\gamma_{h^{-1}} \otimes \id_{V_\sigma})F_x(g)$ for all $h \in H$ and $g \in G$.
Hence $F_x \in \aD \otimes V_\sigma$. 
Furthermore, $(\id_\aC \otimes \sigma_k) F_x(gk) = F_x(g)$ for all $g,k \in G$, which establishes that $F_x \in \Gamma_\aD(\sigma)$. 
Since $F_x(e) = x$, the map $x \mapsto F_x$ is the inverse of $\ev_e^\sigma$.
The map $\ev_e^\sigma$ is $\aB$-bilinear, since elements of $\aB$ are identified with constant functions.
Moreover, for $F,K \in \Gamma_\aD(\sigma)$,
\[
    \langle F,K\rangle_\aB = \langle F(e),K(e)\rangle_\aB,
\]
where the inner products are those of the respective $\aB$-correspondences.
Thus $\ev_e^\sigma$ is unitary.

Let $\tau \in \rep(G)$. 
For each intertwiner $T:V_\sigma\to V_\tau$, naturality follows from
\[
    (\id_\aC \otimes T)F(e) = ((\id_\aC \otimes T)F)(e).
\]
Compatibility with the tensor structure follows from
\[
    \ev_e^{\sigma\otimes\tau}(F_{12}K_{13}) = F(e)_{12} K(e)_{13}
\]
for $F \in \Gamma_\aD(\sigma)$ and $K \in \Gamma_\aD(\tau)$. 
Thus $(\ev_e^\sigma)_{\sigma \in \rep(G)}$ defines a unitary monoidal natural~isomorphism.
\end{proof}

\begin{remark}
\label{rem:classical-associated}
Suppose that $\aC=C(Q)$, where $Q\to X$ is a principal $H$-bundle.
By Gelfand duality, the induced algebra $\operatorname{Ind}_H^G(\aC)$ is canonically isomorphic to $C(Q \times_H G)$. 
Under the Serre--Swan correspondence, Theorem~\ref{thm:induction-tensor-functor} therefore recovers the classical isomorphism of associated vector bundles
\[
    (Q \times_H G) \times_G V_\sigma \cong Q \times_H V_{\sigma\vert_H}.
\]
Thus Theorem~\ref{thm:induction-tensor-functor} is the noncommutative counterpart of the compatibility between extension of the structure group and the formation of associated vector bundles.
\end{remark}

\section{Reduction of the structure group}
\label{sec:reduction}

Throughout this section, let $H\subseteq G$ be a closed subgroup and let $(\aA,G,\alpha)$ be a free C\Star-dynamical system with fixed point algebra $\aB:=\aA^G$.

\begin{definition}
\label{def:reduction}
An \emph{$H$-reduction} of $(\aA,G,\alpha)$ is a pair $((\aC,H,\gamma),\phi)$, where $(\aC,H,\gamma)$ is a free C\Star-dynamical system with fixed point algebra $\aB$ and $\phi: \aA \to \aC$ is a unital \Star-homomorphism satisfying $\phi\vert_\aB = \id_\aB$ and $\phi \circ \alpha_h = \gamma_h \circ \phi$ for all $h \in H$.
\end{definition}

\begin{remark}
\label{rem:classical-reduction-map}
Suppose that $\aA=C(P)$ and $\aC=C(Q)$, where $P\to X$ and $Q\to X$ are principal $G$- and $H$-bundles, respectively. 
By Gelfand duality, a unital \Star-homomorphism $\phi: C(P) \to C(Q)$ is induced by a continuous map $Q \to P$. 
The conditions in Definition~\ref{def:reduction} imply that this map is $H$-equivariant and covers the identity on $X$. 
It therefore identifies $Q$ with a principal $H$-subbundle of $P$. 
Thus Definition~\ref{def:reduction} is the contravariant C\Star-algebraic counterpart of the classical notion of reduction of the structure group.
\end{remark}

The following rigidity property of free C\Star-dynamical systems will be used below. 
It is recalled from \cite[Lem.~3.10]{SchWa24}.

\begin{lemma}
\label{lem:free-subalgebra-rigidity}
Let $(\aD,G,\beta)$ be a free C\Star-dynamical system with fixed point algebra $\aB$, and let $\aD_0 \subseteq \aD$ be a unital $G$-invariant C\Star-subalgebra. 
If the restricted C\Star-dynamical system $(\aD_0,G,\beta\vert_{\aD_0})$ is free and $\aD_0^G = \aD^G = \aB$, then $\aD_0 = \aD$.
\end{lemma}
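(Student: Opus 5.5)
Every nonzero element of $\aD$ should lie in the closure of the isotypic components, and these should already be contained in $\aD_0$. For each irreducible $\sigma\in\rep(G)$, freeness identifies the isotypic component $\aD(\sigma)$ with the image of $\Gamma_\aD(\sigma)\otimes \overline{V_\sigma}$; more precisely, $\aD(\sigma)$ is spanned by slices $(\id\otimes\langle\cdot,v\rangle)(x)$ with $x\in\Gamma_\aD(\bar\sigma)$. By Peter--Weyl, the algebraic direct sum $\bigoplus_\sigma \aD(\sigma)$ is dense in $\aD$. So it suffices to show $\Gamma_{\aD_0}(\sigma)=\Gamma_\aD(\sigma)$ for every $\sigma\in\rep(G)$, since $\Gamma_{\aD_0}(\sigma)=\Gamma_\aD(\sigma)\cap(\aD_0\otimes V_\sigma)$ and the slices then land in $\aD_0$.

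Fix $\sigma$. Then $\Gamma_{\aD_0}(\sigma)\subseteq\Gamma_\aD(\sigma)$ is an inclusion of right Hilbert $\aB$-modules, with the inner product of the smaller module the restriction of the larger one; both inner products are $\aB$-valued since $\aD_0^G=\aB$. Freeness of $(\aD_0,G,\beta)$ provides finitely many $s_1,\dots,s_n\in\Gamma_{\aD_0}(\sigma)$ with $\sum_i s_i s_i^*=1\otimes\id_{V_\sigma}$ in $\aD_0\otimes B(V_\sigma)$. This is the standard characterization of freeness via the Ellwood map: the unit of $C(G)\otimes B(V_\sigma)$ corresponding to $\sigma$ lies in the closed range, and a finite-dimensional approximation argument produces such a frame. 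This step I expect to be the main technical obstacle, and I would cite the equivalence of freeness with the existence of such finite frames for each $\sigma$, as in \cite{SchWa17}.

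Given the frame, any $x\in\Gamma_\aD(\sigma)$ decomposes as
\[
x=(1\otimes\id)x=\sum_i s_i\,(s_i^*x),
\]
where $s_i^*x$ is computed in $\aD\otimes B(V_\sigma)$ and contracted to an element of $\aD$. Indeed it equals $\langle s_i,x\rangle_\aD$, which is $G$-invariant because both $s_i$ and $x$ are invariant, and therefore lies in $\aD^G=\aB\subseteq\aD_0$. Hence $x=\sum_i s_i b_i$ with $b_i\in\aB$ and $s_i\in\aD_0\otimes V_\sigma$, so $x\in\Gamma_{\aD_0}(\sigma)$. Combined with the density argument of the first paragraph, this gives $\aD_0=\aD$. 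The only care needed is the precise bookkeeping of the tensor legs in $s_is_i^*$ and $s_i^*x$, which I would handle by writing $s_i$ in an orthonormal basis of $V_\sigma$.
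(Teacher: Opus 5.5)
The paper does not prove this lemma. It is only recalled from \cite[Lem.~3.10]{SchWa24}, so there is no in-paper argument to compare with, and your proposal has to stand as a self-contained proof. It does: the argument is correct.

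The Peter--Weyl reduction to $\Gamma_{\aD_0}(\sigma)=\Gamma_\aD(\sigma)$ is sound. The description of $\aD(\sigma)$ by slices of elements of $\Gamma_\aD(\bar\sigma)$ holds for every action, so freeness plays no role at that point. The absorption step $x=\sum_i s_i\langle s_i,x\rangle_\aD$, with $\langle s_i,x\rangle_\aD\in\aD^G=\aB\subseteq\aD_0$, is exactly where the hypothesis on fixed point algebras enters.

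The frame you want to cite is the familiar characterization of freeness by covariant isometries. The matrix over $\aD_0$ whose rows are the $s_i^*$ is an isometry $s(\sigma)\in\aD_0\otimes B(V_\sigma,\C^n)$ with $s(\sigma)^*s(\sigma)=1\otimes\id_{V_\sigma}$. If you prefer to derive it directly from Ellwood's condition, proceed as follows.
\begin{enumerate}
\item Decompose the range of the Ellwood map along the $\sigma$-isotypic part of right translation. This gives $\overline{\Gamma_{\aD_0}(\sigma)\aD_0}=\aD_0\otimes V_\sigma$.
\item Approximating $1\otimes e_k$ for an orthonormal basis $(e_k)$ gives some $T=\sum_j x_jx_j^*$ with $x_j\in\operatorname{span}\Gamma_{\aD_0}(\sigma)\aD_0$ and $\|T-1\|<1$.
\item Apply the unital completely positive average $\mathcal{E}:=\int_G(\beta_g\otimes\operatorname{Ad}\sigma_g)\,dg$. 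It maps $c\,a\,c'^*$ to $c\,E(a)\,c'^*$ for $c,c'\in\Gamma_{\aD_0}(\sigma)$, where $E(a)=\int_G\beta_g(a)\,dg$.
\item Hence $\mathcal{E}(T)$ is invertible. Factoring the positive matrices $[E(a_la_{l'}^*)]_{l,l'}$ over $\aB$ shows it has the form $\sum_k t_kt_k^*$ with $t_k\in\Gamma_{\aD_0}(\sigma)$.
\item Set $s_k:=\mathcal{E}(T)^{-1/2}t_k$. This is the required frame, because $\mathcal{E}(T)^{-1/2}$ is invariant under $\beta_g\otimes\operatorname{Ad}\sigma_g$, so each $s_k$ stays in $\Gamma_{\aD_0}(\sigma)$.
\end{enumerate}

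Compared with the citation, your route makes clear which hypotheses actually matter. Freeness of the ambient system $(\aD,G,\beta)$ is never used; only freeness of $(\aD_0,G,\beta\vert_{\aD_0})$ and the inclusion $\aD^G\subseteq\aD_0$ are needed. So the statement holds without assuming $\aD$ free, and freeness of $\aD$ then follows from $\aD=\aD_0$. This changes nothing for the paper, since in its applications the ambient system is free anyway.
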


The following convenient form is an immediate consequence:

\begin{corollary}
\label{cor:equivariant-rigidity}
Let $(\aA,G,\alpha)$ and $(\aD,G,\beta)$ be free C\Star-dynamical systems with $\aA^G = \aD^G = \aB$.
If $\phi: \aA \to \aD$ is an injective unital $G$-equivariant \Star-homomorphism satisfying $\phi\vert_\aB = \id_\aB$, then $\phi$ is a \Star-isomorphism.
\end{corollary}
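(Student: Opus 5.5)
The plan is to apply Lemma~\ref{lem:free-subalgebra-rigidity} to the image of $\phi$. Set $\aD_0 := \phi(\aA) \subseteq \aD$. Since $\phi$ is an injective \Star-homomorphism between C\Star-algebras, it is isometric and its image is closed, so $\aD_0$ is a C\Star-subalgebra of $\aD$. It is unital because $\phi$ is unital, and it is $G$-invariant because $\beta_g(\phi(a)) = \phi(\alpha_g(a))$. Moreover, $\phi: \aA \to \aD_0$ is a $G$-equivariant \Star-isomorphism. Once we know $\aD_0 = \aD$, $\phi$ is surjective, hence a \Star-isomorphism, and the corollary follows.

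It remains to check the two hypotheses of the lemma for $\aD_0$.

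\emph{Freeness.} The Ellwood map of $(\aD_0,G,\beta\vert_{\aD_0})$ is intertwined with that of $(\aA,G,\alpha)$. On one side we have $\phi\otimes\phi$ on $\aA \otimes_{\mathrm{alg}} \aA$; on the other, composition with $\phi$, which is an isometric isomorphism $C(G,\aA)\to C(G,\aD_0)$. Indeed,
\[
\phi(x)\beta_g(\phi(y)) = \phi(x\alpha_g(y)).
\]
Hence the range of the Ellwood map for $\aD_0$ is the image of a dense subspace under an isometric isomorphism, and so it is dense.

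\emph{Fixed point algebra.} We need $\aD_0^G = \aB$. If $\phi(a)$ is $G$-fixed, then $\phi(\alpha_g(a)) = \phi(a)$ for all $g \in G$. Injectivity gives $\alpha_g(a) = a$, so $a\in\aB$ and $\phi(a) = a$. Conversely, $\phi\vert_\aB = \id_\aB$ gives $\aB \subseteq \aD_0$. Here we use the identification of $\aB$ inside both algebras as the common fixed point algebra.

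With both hypotheses in place, Lemma~\ref{lem:free-subalgebra-rigidity} yields $\aD_0 = \aD$. No step is a real obstacle. The only point needing care is that the image of an injective \Star-homomorphism is closed, which holds automatically for C\Star-algebras.
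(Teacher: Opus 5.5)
Your proof is correct and matches the paper's: you apply Lemma~\ref{lem:free-subalgebra-rigidity} to $\aD_0=\phi(\aA)$, transport freeness along the $G$-equivariant isomorphism $\aA\cong\phi(\aA)$, and obtain $\phi(\aA)^G=\aB$ from equivariance together with $\phi\vert_\aB=\id_\aB$. The only difference is that you spell out the Ellwood-map intertwining and the fixed-point computation, which the paper states in one line each.
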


\begin{proof}
The image $\phi(\aA)$ is a unital $G$-invariant C\Star-subalgebra of $\aD$. 
Since $\phi$ is injective, the restricted C\Star-dynamical system $(\phi(\aA),G,\beta\vert_{\phi(\aA)})$ is free, and $\phi(\aA)^G = \phi(\aA^G) = \aB = \aD^G$.
Lemma~\ref{lem:free-subalgebra-rigidity} therefore implies that $\phi(\aA)=\aD$.
\end{proof}

The following result identifies a reduction with an induced C\Star-dynamical system:

\begin{lemma}
\label{lem:reduction-induced}
Let $(\aC,H,\gamma)$ together with $\phi: \aA \to \aC$ be an $H$-reduction of $(\aA,G,\alpha)$. 
Then
\[
    \iota: \aA \to \operatorname{Ind}_H^G(\aC),
    \qquad
    \iota(a)(g) := \phi(\alpha_g(a)),
\]
defines a $G$-equivariant \Star-isomorphism satisfying $\iota\vert_\aB = \id_\aB$.
\end{lemma}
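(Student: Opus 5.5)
The plan is to check that $\iota$ is a well-defined, unital, $G$-equivariant \Star-homomorphism restricting to the identity on $\aB$, that it is injective, and then to obtain surjectivity from Corollary~\ref{cor:equivariant-rigidity}. By Proposition~\ref{prop:induction-free}, $(\operatorname{Ind}_H^G(\aC),G,\beta)$ is free with fixed point algebra identified with $\aB$ via constant functions. So the only non-formal step should be injectivity.

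\emph{Well-definedness and algebraic properties.} For $a \in \aA$, the map $g \mapsto \phi(\alpha_g(a))$ is continuous, since $\alpha$ is strongly continuous and $\phi$ is norm-continuous. For $h \in H$ we have
\[
    \iota(a)(h^{-1}g) = \phi(\alpha_{h^{-1}}\alpha_g(a)) = \gamma_{h^{-1}}(\phi(\alpha_g(a))),
\]
using the $H$-equivariance of $\phi$. Hence $\iota(a) \in C(G,\aC)^H$. Multiplicativity, the \Star-property and unitality hold pointwise because $\phi$ and each $\alpha_g$ are unital \Star-homomorphisms. Equivariance follows from
\[
    \iota(\alpha_k(a))(g) = \phi(\alpha_{gk}(a)) = (\beta_k\iota(a))(g).
\]
For $b \in \aB$ we have $\iota(b)(g) = \phi(b) = b$, so $\iota(b)$ is the constant function $b$. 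Under the identification of Proposition~\ref{prop:induction-free}, this gives $\iota\vert_\aB = \id_\aB$.

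\emph{Injectivity.} This is the main obstacle, since $\phi$ itself need not be injective a priori. The plan is to use the spectral subspaces and the conditional expectation $E(a) := \int_G \alpha_g(a)\,dg$ onto $\aB$. Suppose $\iota(a) = 0$. Then $\iota(a^*a) = 0$, so $\phi(\alpha_g(a^*a)) = 0$ for every $g$. Integrating over $G$ and using that $\phi$ is continuous and linear, so that it commutes with the Bochner integral, gives
\[
    0 = \int_G \phi(\alpha_g(a^*a))\,dg = \phi(E(a^*a)) = E(a^*a),
\]
where the last equality holds because $E(a^*a) \in \aB$ and $\phi\vert_\aB = \id_\aB$. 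Since $E$ is faithful, because the Haar measure has full support and the integrand is positive and continuous, we conclude $a^*a = 0$ and hence $a = 0$. No freeness is needed for this step.

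\emph{Conclusion.} With $\iota$ established as an injective unital $G$-equivariant \Star-homomorphism between free C\Star-dynamical systems whose fixed point algebras are both $\aB$, and with $\iota\vert_\aB = \id_\aB$, Corollary~\ref{cor:equivariant-rigidity} yields that $\iota$ is a \Star-isomorphism.
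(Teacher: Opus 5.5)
Your proposal is correct and follows the paper's proof essentially step for step. You verify covariance, the \Star-homomorphism property, equivariance and $\iota\vert_\aB=\id_\aB$ pointwise, prove injectivity via $E(a^*a)=\phi(E(a^*a))=\int_G\phi(\alpha_g(a^*a))\,dg=0$ and faithfulness of the conditional expectation, and obtain surjectivity from Proposition~\ref{prop:induction-free} combined with Corollary~\ref{cor:equivariant-rigidity}.
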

\begin{proof}
Let $a \in \aA$.
Continuity of the action $\alpha$ implies that $\iota(a) \in C(G,\aC)$ and, for all $h\in H$ and $g \in G$,
\[
    \iota(a)(h^{-1}g)
    =
    \phi(\alpha_{h^{-1}}(\alpha_g(a)))
    =
    \gamma_{h^{-1}}(\phi(\alpha_g(a)))
    =
    \gamma_{h^{-1}}(\iota(a)(g)).
\]
Consequently, $\iota(a) \in \operatorname{Ind}_H^G(\aC)$. 

Pointwise operations show that $\iota$ is a unital \Star-homomorphism.
For all $g,k \in G$,
\[
    (\beta_k\iota(a))(g)
    =
    \iota(a)(gk)
    =
    \phi(\alpha_g(\alpha_k(a)))
    =
    \iota(\alpha_k(a))(g).
\]
Hence $\iota$ is $G$-equivariant. 
For all $b \in \aB$ and $g \in G$, $\iota(b)(g) = \phi(\alpha_g(b)) = b$, so $\iota\vert_\aB = \id_\aB$.

To prove injectivity, suppose that $\iota(a)=0$. 
Then $\phi(\alpha_g(a^*a))=0$ for all $g \in G$. 
Let
\[
    E_\aA: \aA \to \aB,
    \qquad
    E_\aA(x) := \int_G\alpha_g(x)\,dg,
\]
denote the canonical conditional expectation. 
Since $\phi\vert_\aB = \id_\aB$,
\[
    E_\aA(a^*a)
    =
    \phi(E_\aA(a^*a))
    =
    \int_G\phi(\alpha_g(a^*a))\,dg
    =
    0.
\]
Faithfulness of $E_\aA$ gives $a=0$. 
Thus $\iota$ is injective.

By Proposition~\ref{prop:induction-free}, $(\operatorname{Ind}_H^G(\aC),G,\beta)$ is free and has fixed point algebra $\aB$. 
The hypotheses of Corollary~\ref{cor:equivariant-rigidity} are therefore satisfied, and $\iota$ is surjective.
This completes the proof.
\end{proof}

The preceding result leads to a characterization of reduction in terms of the associated tensor functors:

\begin{theorem}
\label{thm:reduction-tannakian}
Let $(\aC,H,\gamma)$ be a free C\Star-dynamical system with fixed point algebra $\aB$.
The following statements are equivalent:
\begin{enumerate}[label=\textup{(\alph*)}]
    \item $(\aA,G,\alpha)$ admits an $H$-reduction to $(\aC,H,\gamma)$.
    \item There exists a unitary monoidal natural isomorphism
    \[
        \eta: \Gamma_\aA \Longrightarrow \Gamma_\aC \circ \operatorname{Res}^G_H
    \]
    whose component at the trivial representation is $\eta_{1_G} = \id_\aB$.
\end{enumerate}
\end{theorem}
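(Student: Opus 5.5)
For (a) $\Rightarrow$ (b), we will combine Lemma~\ref{lem:reduction-induced} with Theorem~\ref{thm:induction-tensor-functor}. Given an $H$-reduction $((\aC,H,\gamma),\phi)$, Lemma~\ref{lem:reduction-induced} provides a $G$-equivariant \Star-isomorphism $\iota:\aA\to\operatorname{Ind}_H^G(\aC)=\aD$ with $\iota\vert_\aB=\id_\aB$. Such an isomorphism induces, for each $\sigma\in\rep(G)$, a unitary isomorphism of $\aB$-correspondences $\iota\otimes\id_{V_\sigma}:\Gamma_\aA(\sigma)\to\Gamma_\aD(\sigma)$. The identification $\iota\vert_\aB=\id_\aB$ makes these maps $\aB$-bilinear. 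Their naturality in $\sigma$ and compatibility with the tensor structure (given by multiplication) are immediate because $\iota$ is a \Star-homomorphism.

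Composing with $\varepsilon$ from Theorem~\ref{thm:induction-tensor-functor} gives $\eta:=\varepsilon\circ(\iota\otimes\id)$. On components this is simply $x\mapsto(\phi\otimes\id)(x)$, since $\iota(a)(e)=\phi(a)$. At the trivial representation, $\Gamma_\aA(1_G)=\aB$, and $\eta_{1_G}=\phi\vert_\aB=\id_\aB$.

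For (b) $\Rightarrow$ (a), we will apply the reconstruction principle of \cite[Sec.~5]{SchWa17}. Theorem~\ref{thm:induction-tensor-functor} lets us form $\varepsilon^{-1}\circ\eta:\Gamma_\aA\Rightarrow\Gamma_\aD$. This is a unitary monoidal natural isomorphism between the tensor functors of two free $G$-systems with fixed point algebra $\aB$ (Proposition~\ref{prop:induction-free}), and it is the identity at the trivial representation. Reconstruction then yields a $G$-equivariant \Star-isomorphism $\Psi:\aA\to\aD$ over $\aB$ inducing $\varepsilon^{-1}\circ\eta$ on isotypic data. Concretely, $\Psi$ is defined on the dense Peter--Weyl subalgebra, where each element of an isotypic component is realized via $\Gamma_\aA(\sigma)$ contracted with vectors of $V_\sigma$. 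Multiplicativity there follows from monoidality, the \Star-property from unitarity, and the extension to the C\Star-closure from the uniqueness of C\Star-completions in the free case.

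Finally, we will set $\phi:=\ev_e\circ\Psi:\aA\to\aC$. This map is a unital \Star-homomorphism, and it restricts to the identity on $\aB$ because constant functions evaluate to themselves. For $H$-equivariance, the covariance relation in $\aD$ gives $F(h)=\gamma_h(F(e))$ for all $h\in H$. Combining this with $G$-equivariance of $\Psi$, we get $\phi(\alpha_h(a))=\Psi(a)(h)=\gamma_h(\phi(a))$. Hence $((\aC,H,\gamma),\phi)$ is an $H$-reduction.

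The main obstacle is the reconstruction step. We must make sure that the cited result really produces a \Star-isomorphism of C\Star-algebras from a monoidal natural isomorphism of the associated functors. This also requires checking that the normalization $\eta_{1_G}=\id_\aB$ is exactly what makes $\Psi$ restrict to the identity on $\aB$. If only a \Star-homomorphism on the algebraic core is available, the fallback is to note that it is injective by faithfulness of the conditional expectations, since it intertwines them via the inner products. One can then extend it and invoke Corollary~\ref{cor:equivariant-rigidity} to obtain surjectivity.
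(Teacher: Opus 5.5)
Your proposal is correct and follows the paper's proof essentially step for step: (a)$\Rightarrow$(b) composes the isomorphism from Lemma~\ref{lem:reduction-induced} with $\varepsilon$ from Theorem~\ref{thm:induction-tensor-functor}, and (b)$\Rightarrow$(a) applies reconstruction to $\varepsilon^{-1}\circ\eta$, then sets $\phi:=\ev_e\circ\Psi$ and checks $H$-equivariance via the covariance relation $f(h)=\gamma_h(f(e))$. The only difference is that the paper cites Neshveyev's reconstruction theorem \cite[Thm.~2.2]{Ne13} for the step you identify as the main obstacle. Your sketch of that step is fine, except that the extension from the algebraic core to the C\Star-closure comes from faithfulness of the conditional expectations, which makes the map isometric, rather than from freeness.
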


\begin{proof}
Suppose first that \textup{(a)} holds. 
By Lemma~\ref{lem:reduction-induced}, $\aA$ and $\operatorname{Ind}_H^G(\aC)$ are $G$-equivariantly \Star-isomorphic over $\aB$, and the resulting isomorphism induces a
unitary monoidal natural isomorphism $\Gamma_\aA \cong \Gamma_{\operatorname{Ind}_H^G(\aC)}$ whose component at the trivial representation is $\id_\aB$.
Composing this with the unitary monoidal natural isomorphism from Theorem~\ref{thm:induction-tensor-functor} proves \textup{(b)}.

Conversely, suppose that \textup{(b)} holds. 
By Theorem~\ref{thm:induction-tensor-functor}, there is a unitary monoidal natural isomorphism
\[
    \varepsilon: \Gamma_{\operatorname{Ind}_H^G(\aC)} \Longrightarrow \Gamma_\aC \circ \operatorname{Res}^G_H.
\]
Hence
\[
    \varepsilon^{-1} \circ \eta: \Gamma_\aA \Longrightarrow \Gamma_{\operatorname{Ind}_H^G(\aC)}
\]
is a unitary monoidal natural isomorphism whose component at the trivial representation is $\id_\aB$.

By the reconstruction theorem for compact (quantum) group actions~\cite[Thm.~2.2]{Ne13}, this unitary monoidal natural isomorphism induces a $G$-equivariant \Star-isomorphism $\Phi: \aA \to \operatorname{Ind}_H^G(\aC)$ satisfying $\Phi\vert_\aB = \id_\aB$.
Let
\[
    \ev_e: \operatorname{Ind}_H^G(\aC) \to \aC,
    \qquad
    f \mapsto f(e),
\]
denote evaluation at the identity and set $\phi := \ev_e \circ \Phi$.
Then $\phi$ is a unital \Star-homomorphism satisfying $\phi\vert_\aB=\id_\aB$.
The $G$-equivariance of $\Phi$ gives
\[
    \phi(\alpha_h(a))
    =
    \ev_e(\beta_h(\Phi(a)))
    =
    \Phi(a)(h)
\]
for all $h \in H$ and $a \in \aA$, while the defining covariance relation of $\operatorname{Ind}_H^G(\aC)$ gives $f(h) = \gamma_h(f(e))$ for all $h \in H$ and $f \in \operatorname{Ind}_H^G(\aC)$.
Consequently,
\[
    \phi(\alpha_h(a))
    =
    \gamma_h(\Phi(a)(e))
    =
    \gamma_h(\phi(a))
\]
for all $h \in H$ and $a \in \aA$.
Thus $\phi$ is an $H$-reduction.
\end{proof}

Theorem~\ref{thm:reduction-tannakian} shows that reduction of the structure group is completely encoded by the associated tensor functor.
More precisely, a reduction from $G$ to $H$ exists precisely when there exists a free C\Star-dynamical system $(\aC,H,\gamma)$ with fixed point algebra $\aB$ such that $\Gamma_\aA$ is unitarily monoidally naturally
isomorphic to
\[
    \Gamma_\aC \circ \operatorname{Res}^G_H,
\]
with the isomorphism restricting to $\id_\aB$ at the trivial representation.
This is the noncommutative counterpart of the classical compatibility between reduction of the structure group and associated vector bundles.

\subsection{Reductions to normal subgroups}

Suppose now that $H \trianglelefteq G$ is a closed normal subgroup. 
Set $Q := G/H$ and let $q:G \to Q$ denote the quotient homomorphism.
Let $\rho$ denote the right translation action of $Q$ on $C(Q)$.
The fixed point algebra $\aA^H$ carries a residual action of $Q$ defined by $\bar\alpha_{q(g)}(a) := \alpha_g(a)$ for all $g \in G$ and $a \in \aA^H$. 
The resulting C\Star-dynamical system $(\aA^H,Q,\bar\alpha)$ is free by \cite[Prop.~3.18]{SchWa17}, and its fixed point algebra is $(\aA^H)^Q = \aA^G = \aB$.

Classically, if $P \to X$ is a principal $G$-bundle, then $P/H \to X$ is a principal $Q$-bundle.
A reduction of $P$ to $H$ exists if and only if this quotient bundle is trivial. 
In the noncommutative setting, reduction can be characterized by a central equivariant embedding of $C(Q)$.

\begin{theorem}\label{thm:normal-reduction}
Let $H \trianglelefteq G$ be a closed normal subgroup and set $Q := G/H$. 
The following statements are equivalent:
\begin{enumerate}[label=\textup{(\alph*)}]
    \item $(\aA,G,\alpha)$ admits an $H$-reduction.
    \item There exists a unital $G$-equivariant \Star-homomorphism $j: C(Q) \to Z(\aA)$,
    where $G$ acts on $C(Q)$ through $q$ by right translation.
    \item There exists a unital $Q$-equivariant \Star-isomorphism $\psi: \aA^H \to \aB \otimes C(Q)$
    satisfying $\psi(b) = b\otimes 1$ for all $b \in \aB$ and $\psi^{-1}(1 \otimes C(Q)) \subseteq Z(\aA)$.
\end{enumerate}
\end{theorem}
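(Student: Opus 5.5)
I would prove the cycle (a)$\Rightarrow$(b)$\Rightarrow$(c)$\Rightarrow$(a). The guiding model is the induced algebra. By Lemma~\ref{lem:reduction-induced}, any $H$-reduction lets us replace $\aA$ by $\operatorname{Ind}_H^G(\aC) = C(G,\aC)^H$. In that algebra, functions $f$ of the form $f(g)=u(q(g))1_\aC$ with $u\in C(Q)$ are visibly central and lie in the induced algebra. The latter holds because $q(h^{-1}g)=q(g)$ and $\gamma$ fixes $1_\aC$.

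\textbf{(a)$\Rightarrow$(b).} Let $((\aC,H,\gamma),\phi)$ be an $H$-reduction and let $\iota$ be the isomorphism of Lemma~\ref{lem:reduction-induced}. Define $j(u):=\iota^{-1}(u\circ q\cdot 1_\aC)$. This is a unital \Star-homomorphism into $Z(\aA)$, since $u\circ q\cdot 1_\aC$ is central in $C(G,\aC)^H$ and $\iota$ is an isomorphism. It is $G$-equivariant, because right translation by $k$ on $u\circ q$ is right translation by $q(k)$ on $u$, and $\iota$ intertwines $\beta$ with $\alpha$.

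\textbf{(b)$\Rightarrow$(c).} Since $H$ acts trivially on $C(Q)$, equivariance puts $j(C(Q))$ inside $Z(\aA)\cap\aA^H$. Define $\psi^{-1}:\aB\otimes C(Q)\to\aA^H$ by $b\otimes u\mapsto b\,j(u)$. This is a well-defined \Star-homomorphism because $j$ is central, so the images commute and $C(Q)$ is nuclear. It is $Q$-equivariant and unital, and it satisfies $b\otimes1\mapsto b$.
\begin{itemize}
\item[] \emph{Injectivity.} Apply $\id_\aB\otimes E$, with $E$ the Haar expectation. The conditional expectation of $(\aA^H,Q,\bar\alpha)$ composed with $\psi^{-1}$ equals $\id_\aB\otimes\int_Q$, which is faithful on $\aB\otimes C(Q)$. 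Hence $\psi^{-1}$ is injective, by the same argument as in Lemma~\ref{lem:reduction-induced}.
\item[] \emph{Surjectivity.} The trivial system $(\aB\otimes C(Q),Q,\id\otimes\rho)$ is free with fixed point algebra $\aB$, and $(\aA^H,Q,\bar\alpha)$ is free with fixed point algebra $\aB$ by \cite[Prop.~3.18]{SchWa17}. Corollary~\ref{cor:equivariant-rigidity} then makes $\psi^{-1}$ an isomorphism.
\end{itemize}
Its image of $1\otimes C(Q)$ is $j(C(Q))\subseteq Z(\aA)$.

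\textbf{(c)$\Rightarrow$(a).} This is the main step. Set $j:=\psi^{-1}|_{1\otimes C(Q)}$, a central, unital, $G$-equivariant copy of $C(Q)$ in $\aA$. Let $I\subseteq\aA$ be the closed two-sided ideal generated by $\{j(u):u(q(e))=0\}$, put $\aC:=\aA/I$, and let $\phi$ be the quotient map. Because $j$ is central, $I=\overline{j(C_0(Q\setminus\{eH\}))\aA}$. Because $j$ is $G$-equivariant and $eH$ is fixed by $H$ under right translation, $I$ is $\alpha_H$-invariant, so $\gamma_h$ descends to $\aC$. Classically this is exactly restriction of $P$ to the fibre $P|_{s^{-1}(eH)}$ of the map $P\to Q$.

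Three things remain to check.
\begin{itemize}
\item[] (i) $\phi|_\aB$ is injective with $\phi(\aB)=\aC^H$. For this I would compute $\aA^H/(I\cap\aA^H)\cong(\aB\otimes C(Q))/(\aB\otimes C_0(Q\setminus\{eH\}))\cong\aB$ via $\psi$. Then $\aC^H$ is identified with the image of $\aA^H$, using that averaging over the compact group $H$ commutes with the quotient.
\item[] (ii) $(\aC,H,\gamma)$ is free. The Ellwood map of $\aA$ has dense range in $C(G,\aA)$, and I would restrict to $H$ and push through $\phi$. The Ellwood map of $\aA$ for the restricted $H$-action has dense range in $C(H,\aA)$, since restriction $C(G,\aA)\to C(H,\aA)$ is surjective by Tietze/Dugundji-type extension for compact groups. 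Applying $\phi$ preserves density.
\item[] (iii) The conditions of Definition~\ref{def:reduction} hold; these are then immediate.
\end{itemize}
I expect (i) to be the main obstacle. Specifically, one must show $I\cap\aB=0$ and that the $H$-fixed elements of $\aC$ lift to $\aA^H$. For the lifting I would use the $H$-expectation $E_H$ on $\aA$, which preserves $I$ because $I$ is $H$-invariant. Then $\phi\circ E_H$ equals the average over $H$ on $\aC$, so $\aC^H=\phi(\aA^H)$, and $\psi$ finishes the argument.
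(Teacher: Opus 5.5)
Your proposal is the paper's proof arranged as the cycle (a)$\Rightarrow$(b)$\Rightarrow$(c)$\Rightarrow$(a). It uses the same central elements $u\circ q\cdot 1_\aC$ of $\operatorname{Ind}_H^G(\aC)$ and the same map $b\otimes u\mapsto b\,j(u)$, made injective by the faithful expectation and surjective by rigidity. It also uses the same quotient by $\overline{j(C_0(Q\setminus\{eH\}))\aA}$ and the same push-forward of the Ellwood map. Putting the trivialization in (b)$\Rightarrow$(c) is slightly more economical than the paper, which additionally computes $(\operatorname{Ind}_H^G(\aC))^H\cong C(Q,\aB)$ in (a)$\Rightarrow$(c).

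The one step you name but do not carry out is $I\cap\aB=0$, equivalently $\psi(I\cap\aA^H)=\aB\otimes C_0(Q\setminus\{eH\})$. It does not follow from $\psi$ together with $E_H(I)\subseteq I$. The ideal $I$ is generated inside $\aA$, and $\psi$ only sees $\aA^H$. A priori, $I\cap\aA^H$ could be larger than the ideal generated by $j(C_0(Q\setminus\{eH\}))$ inside $\aA^H$.

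The missing observation is that $j(C(Q))\subseteq\aA^H$ and $E_H$ is $\aA^H$-bimodular, so $E_H(j(u)a)=j(u)E_H(a)$. Since $E_H$ is the identity on $I\cap\aA^H$, this gives
\[
I\cap\aA^H=E_H(I\cap\aA^H)\subseteq\overline{j(C_0(Q\setminus\{eH\}))\aA^H}=\psi^{-1}\bigl(\aB\otimes C_0(Q\setminus\{eH\})\bigr).
\]
The right-hand side meets $\psi^{-1}(\aB\otimes 1)=\aB$ trivially. This is the paper's argument, and with it your proof is complete.
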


\begin{proof}
Suppose first that~(a) holds, and let $(\aC,H,\gamma)$ together with $\phi: \aA \to \aC$ be an $H$-reduction.
By Lemma~\ref{lem:reduction-induced}, there exists a $G$-equivariant \Star-isomorphism over $\aB$,
\[
    \iota: \aA \to \operatorname{Ind}_H^G(\aC),
    \qquad
    \iota(a)(g) := \phi(\alpha_g(a)).
\]
The $H$-fixed point algebra of the induced system is canonically identified with $C(Q,\aB)$.
Indeed, if $f \in \operatorname{Ind}_H^G(\aC)$ is $H$-invariant, then $f(gh) = f(g)$ for all $g \in G$ and $h \in H$.
Since $H$ is normal, this also implies $f(h^{-1}g) = f(g)$ for all $g \in G$ and $h \in H$.
The covariance relation therefore gives $f(g) \in \aC^H = \aB$ for all $g \in G$.
Conversely, every element of $C(Q,\aB)$ determines an $H$-fixed element of the induced algebra.
In consequence,
\[
    (\operatorname{Ind}_H^G(\aC))^H \cong C(Q,\aB) \cong \aB \otimes C(Q).
\]
Restricting $\iota$ to the $H$-fixed point algebras and using the preceding identification yields a $Q$-equivariant \Star-isomorphism $\psi: \aA^H \to \aB \otimes C(Q)$ satisfying $\psi(b) = b \otimes 1$ for all $b \in \aB$.

Let $f \in C(Q)$ and define
\[
    m_f(g) := f(q(g))1_{\aC},
    \qquad g \in G.
\]
Then $m_f \in \operatorname{Ind}_H^G(\aC)$.
Moreover, $m_f$ is central, as its values are scalar multiples of $1_{\aC}$, and $H$-invariant, as $m_f(gh)=m_f(g)$ for all $g \in G$ and $h \in H$.
It follows that
\[
    \psi^{-1}(1 \otimes f) = \iota^{-1}(m_f) \in Z(\aA).
\]
This proves~(c).

Suppose next that~(c) holds. 
Define
\[
    j: C(Q) \to Z(\aA),
    \qquad
    j(f) := \psi^{-1}(1 \otimes f).
\]
Then $j$ is a unital \Star-homomorphism, and the $Q$-equivariance of $\psi$ gives $\alpha_g(j(f)) = j(\rho_{q(g)}f)$ for all $g \in G$ and $f \in C(Q)$. 
Thus~(b) holds.

Finally, suppose that~(b) holds.
The $G$-equivariance of $j$ implies
\[
    \alpha_h(j(f)) = j(\rho_{q(h)}f) = j(f)
\]
for all $h \in H$ and $f \in C(Q)$.
Hence $j(C(Q)) \subseteq \aA^H \cap Z(\aA)$.

As an intermediate step in the construction of an $H$-reduction, it is shown that $(\aA^H,Q,\bar\alpha)$ is trivial.
Define
\[
    \varphi: \aB \otimes C(Q) \to \aA^H,
    \qquad
    \varphi(b \otimes f) := bj(f).
\]
Since $j(C(Q))$ is central and $C(Q)$ is commutative, this defines a unital \Star-homomorphism.
Moreover, $\varphi$ is $Q$-equivariant and $\varphi(b \otimes 1) = b$ for all $b \in \aB$.

Let
\[
    E_Q: \aA^H \to \aB,
    \qquad
    E_Q(a) := \int_Q  \bar\alpha_q(a)\,dq,
\]
denote the canonical conditional expectation associated with $(\aA^H,Q,\bar\alpha)$, and let
\[
    H_Q: C(Q) \to \C,
    \qquad
    H_Q(f) := \int_Q f(q)\,dq,
\]
denote integration with respect to normalized Haar measure on $Q$.
By the $G$-equivariance of $j$ and the invariance of Haar measure,
\[
    E_Q(j(f)) = \int_Q\bar\alpha_q(j(f))\,dq = j\left(\int_Q\rho_q f\,dq\right) = H_Q(f)1_{\aB}
\]
for all $f \in C(Q)$.
Consequently,
\[
    E_Q(\varphi(x)^* \varphi(x)) = (\operatorname{id}_{\aB} \otimes H_Q)(x^*x)
\]
for all $x \in \aB \otimes C(Q)$.
Since $\operatorname{id}_{\aB} \otimes H_Q$ is faithful, $\varphi$ is injective.
The image of $\varphi$ is therefore a unital $Q$-invariant C\Star-subalgebra of $\aA^H$ on which
the restricted action is free, with fixed point algebra $\aB$.
Since $(\aA^H,Q,\bar\alpha)$ is also free and has the same fixed point algebra, Lemma~\ref{lem:free-subalgebra-rigidity} implies that $\varphi$ is surjective. 

With this identification established, let
\[
    I_e := \{f \in C(Q): f(e_Q) = 0\},
    \qquad
    J_e := \overline{j(I_e)\aA}.
\]
Since $j(C(Q)) \subseteq Z(\aA)$, $J_e$ is a closed two-sided ideal of $\aA$. 
Moreover, $j(C(Q)) \subseteq \aA^H$, and so $J_e$ is $H$-invariant.
It follows that the action of $H$ descends to an action $\gamma$ on 
\[
    \aC := \aA/J_e.
\]
Let $\phi: \aA \to \aC$ denote the quotient map.
By construction, $\phi \circ \alpha_h = \gamma_h \circ \phi$ for all $h \in H$.

It remains to show that $(\aC,H,\gamma)$ is free and has fixed point algebra $\aB$.
Let
\[
    E_H: \aA \to \aA^H,
    \qquad
    E_H(a) := \int_H\alpha_h(a)\,dh,
\]
denote the canonical conditional expectation associated with the restricted $H$-action.
By the $\aA^H$-bimodularity and continuity of $E_H$, $E_H(J_e) \subseteq \overline{j(I_e)\aA^H}$.
Since $E_H$ restricts to the identity on $\aA^H$ and $\overline{j(I_e)\aA^H} \subseteq J_e \cap \aA^H$, it follows that
\[
    J_e \cap \aA^H = \overline{j(I_e)\aA^H}.
\]

Averaging over $H$ yields $\aC^H \cong \aA^H/(J_e \cap \aA^H)$.
Under the isomorphism $\varphi$, the ideal~$J_e \cap \aA^H$ corresponds to $\aB \otimes I_e$.
Consequently,
\[
    \aC^H \cong (\aB \otimes C(Q))/(\aB \otimes I_e) \cong \aB,
\]
where the last isomorphism is induced by evaluation at $e_Q$. 
Since $\varphi(b \otimes 1) = b$ for all $b \in \aB$, the restriction $\phi\vert_{\aB}$ corresponds to $\operatorname{id}_{\aB}$ under this identification.

To establish freeness, recall that the restriction of the free $G$-action on $\aA$ to $H$ is free (see \cite[Prop.~3.17]{SchWa17}).
The surjective $H$-equivariant \Star-homomorphism $\phi: \aA \to \aC$ induces a surjective map $C(H,\aA) \to C(H,\aC)$ that carries the range of the Ellwood map for $(\aA,H,\alpha\vert_H)$ onto the range of that for $(\aC,H,\gamma)$.
Consequently, the latter has dense range, and hence $(\aC,H,\gamma)$ is free.

Thus $(\aC,H,\gamma)$ together with $\phi$ is an $H$-reduction. 
This proves~(a) and completes the equivalence.
\end{proof}

The homomorphism $j$ in Theorem~\ref{thm:normal-reduction}\textup{(b)}
is automatically injective, because its kernel is a $Q$-invariant ideal of $C(Q)$ and the right translation action of $Q$ is transitive.

Thus, reduction to a normal subgroup is characterized by the existence of a central equivariant embedding of $C(Q)$ into $\aA$.
In particular, triviality of the residual $Q$-system
is a necessary condition for reduction:

\begin{corollary}\label{cor:residual-trivial}
Suppose that $(\aA,G,\alpha)$ admits an $H$-reduction. 
Then $(\aA^H,Q,\bar\alpha)$ is $Q$-equivariantly isomorphic over $\aB$ to $(\aB \otimes C(Q),Q,\operatorname{id}_{\aB} \otimes \rho)$.
\end{corollary}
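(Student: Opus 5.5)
This follows directly from Theorem~\ref{thm:normal-reduction}. Assume $(\aA,G,\alpha)$ admits an $H$-reduction, so statement~(a) of that theorem holds. By the implication (a)$\Rightarrow$(c), there is a unital $Q$-equivariant \Star-isomorphism
\[
    \psi: \aA^H \to \aB \otimes C(Q)
\]
with $\psi(b) = b \otimes 1$ for all $b \in \aB$. The remaining work is to check that this $\psi$ is exactly the isomorphism the corollary asks for.

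Two points need checking.

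\emph{Equivariance for the right action.} "$Q$-equivariant" in~(c) should mean $\psi \circ \bar\alpha_{q} = (\id_\aB \otimes \rho_{q}) \circ \psi$ for all $q \in Q$. To confirm this, recall how $\psi$ was built: it is the restriction of $\iota$ to $H$-fixed points, followed by the identification $(\operatorname{Ind}_H^G(\aC))^H \cong C(Q,\aB)$. Under this identification, right translation $\beta_k$ on the induced algebra becomes right translation by $q(k)$ on $C(Q,\aB) \cong \aB \otimes C(Q)$, which is $\id_\aB \otimes \rho_{q(k)}$. Combined with the $G$-equivariance of $\iota$ from Lemma~\ref{lem:reduction-induced}, this gives the required intertwining relation.

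\emph{Isomorphism "over $\aB$".} This means $\psi$ restricts to the canonical inclusion $b \mapsto b \otimes 1$, which is the stated property $\psi(b) = b \otimes 1$.

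Alternatively, one can start from~(b) and use the map $\varphi: \aB \otimes C(Q) \to \aA^H$ constructed in the proof of (b)$\Rightarrow$(a). That proof already showed $\varphi$ is bijective, $Q$-equivariant and restricts to the identity on $\aB$, so $\varphi^{-1}$ also works.

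There is no real obstacle here. The only care needed is the bookkeeping in the first point: making sure the residual action $\bar\alpha$ corresponds under $\psi$ to $\id_\aB \otimes \rho$ rather than to left translation.
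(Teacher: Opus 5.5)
Your proposal is correct and matches the paper, which gives no separate proof and treats the corollary as an immediate consequence of the implication (a)$\Rightarrow$(c) in Theorem~\ref{thm:normal-reduction}. Your check that, under $(\operatorname{Ind}_H^G(\aC))^H \cong C(Q,\aB)$, right translation by $k$ becomes $\id_\aB \otimes \rho_{q(k)}$ is exactly the bookkeeping that the paper's proof of that implication leaves implicit.
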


\begin{remark}
By Theorem~\ref{thm:normal-reduction}, reduction to $H$ is characterized by the existence of a $Q$-equivariant \Star-isomorphism $\psi: \aA^H \to \aB \otimes C(Q)$ over $\aB$ satisfying $\psi^{-1}(1 \otimes C(Q)) \subseteq Z(\aA)$.
In the commutative case, the centrality condition is automatic.
In general, however, triviality of the residual $Q$-C\Star-dynamical system does not imply the existence of an $H$-reduction, as the following example shows.
\end{remark}

\begin{example}
Let $G := \Z_2 \times \Z_2$ and let $\aA := M_2(\C)$.
Consider the unitaries
\[
    u:=
    \begin{pmatrix}
        1 & 0\\
        0 & -1
    \end{pmatrix},
    \qquad
    v:=
    \begin{pmatrix}
        0 & 1\\
        1 & 0
    \end{pmatrix}.
\]
Since $uv=-vu$, the automorphisms $\operatorname{Ad}(u)$ and $\operatorname{Ad}(v)$ commute and define an action $\alpha$ of $G$
on $\aA$.
The corresponding spectral subspaces are one-dimensional and spanned by unitaries.
Consequently, the action is free and $\aB := \aA^G = \C 1_\aA$.

Let $H \cong \Z_2$ be the subgroup generated by the first factor and set $Q := G/H \cong \Z_2$.
Then
\[
    \aA^H
    =
    \left\{
        \begin{pmatrix}
            a & 0\\
            0 & b
        \end{pmatrix}
        :a,b \in \C
    \right\}
    \cong C(Q).
\]
The residual $Q$-action interchanges the diagonal entries.
Hence $(\aA^H,Q,\bar\alpha) \cong (C(Q),Q,\rho)$.

Nevertheless, $(\aA,G,\alpha)$ admits no $H$-reduction.
Indeed, by Theorem~\ref{thm:normal-reduction}, such a reduction would give a unital $Q$-equivariant \Star-homomorphism $j:C(Q) \to Z(\aA) = \C 1_\aA$.
No such homomorphism exists, because every unital \Star-homomorphism from $C(Q)$ to $\C$ is evaluation at a point, and no point of
$Q$ is fixed under all right translations.
Thus triviality of the residual system is not sufficient for reduction.
\end{example}

The preceding corollary has a particularly simple formulation in terms of associated correspondences.
Let
\[
    \Gamma_{\aA^H}: \rep(Q) \to \corr(\aB)
\]
denote the tensor functor associated with the residual $Q$-action.

Suppose that $(\aA,G,\alpha)$ admits an $H$-reduction.
By Corollary~\ref{cor:residual-trivial}, $(\aA^H,Q,\bar\alpha)$ is $Q$-equivariantly isomorphic over $\aB$ to $(\aB \otimes C(Q),Q,\id_\aB \otimes \rho)$.
For $\tau \in \rep(Q)$, evaluation at the identity element of $Q$ identifies the corresponding $\aB$-correspondence with $\aB \otimes V_\tau$.
These identifications are natural in $\tau$ and compatible with tensor products.
Consequently, we obtain the following corollary:

\begin{corollary}\label{cor:residual-functor}
Under the assumptions of Corollary~\ref{cor:residual-trivial}, there exists a unitary monoidal natural isomorphism
$\Gamma_{\aA^H} \cong \Gamma_{\mathrm{triv}}$, where
\[
    \Gamma_{\mathrm{triv}}(\tau):= \aB \otimes V_\tau,
    \qquad \tau \in \rep(Q).
\]
In particular, for each $\tau \in \rep(Q)$,
\[
    \Gamma_{\aA^H}(\tau) \cong \aB \otimes V_\tau \cong \aB^{\dim(V_\tau)}
\]
as right Hilbert $\aB$-modules.
\end{corollary}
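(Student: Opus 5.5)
The plan is to transport the problem along the isomorphism of Corollary~\ref{cor:residual-trivial} and then compute the associated functor of the trivial system $(\aB \otimes C(Q),Q,\id_\aB \otimes \rho)$ directly. A $Q$-equivariant \Star-isomorphism $\psi: \aA^H \to \aB \otimes C(Q)$ over $\aB$ gives, for each $\tau \in \rep(Q)$, the map $\psi \otimes \id_{V_\tau}$. This map carries $\Gamma_{\aA^H}(\tau)$ onto $\Gamma_{\aB\otimes C(Q)}(\tau)$. It is $\aB$-bilinear because $\psi(b)=b\otimes 1$, and it preserves the $\aB$-valued inner products because $\psi$ is a \Star-homomorphism that fixes $\aB$. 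It is natural in intertwiners and multiplicative on the tensor structure, since that structure is given by multiplication in the algebra. This yields a unitary monoidal natural isomorphism $\Gamma_{\aA^H} \cong \Gamma_{\aB \otimes C(Q)}$.

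Next, identify $\aB \otimes C(Q) \otimes V_\tau$ with $C(Q,\aB \otimes V_\tau)$. An element $F$ lies in $\Gamma_{\aB\otimes C(Q)}(\tau)$ exactly when $(\id_\aB \otimes \tau_k)F(pk) = F(p)$ for all $p,k \in Q$. This is the same computation as in the proof of Theorem~\ref{thm:induction-tensor-functor}, now with $H$ trivial. Hence $F(p) = (\id_\aB \otimes \tau_p^{-1})F(e_Q)$. Conversely, every $x \in \aB \otimes V_\tau$ defines an invariant function $F_x(p) := (\id_\aB \otimes \tau_p^{-1})x$. So evaluation at $e_Q$ is a bijection onto $\aB \otimes V_\tau$. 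Equivalently, one can apply Theorem~\ref{thm:induction-tensor-functor} with $G$ replaced by $Q$, $H$ by the trivial subgroup, and $\aC := \aB$ with the trivial action. The system $(\aB,\{e\},\id)$ is trivially free, and $\operatorname{Ind}_{\{e\}}^Q(\aB) = C(Q,\aB)$. Its associated functor $\Gamma_\aB \circ \operatorname{Res}$ sends $\tau$ to $\aB \otimes V_\tau$, with inner product $\langle a\otimes v, b \otimes w\rangle = \langle v,w\rangle a^*b$ and tensor structure $x_{12}y_{13}$. That theorem already gives unitarity, naturality and monoidality of $\ev_e$, so this is the route I would take. Composing the two isomorphisms gives $\Gamma_{\aA^H} \cong \Gamma_{\mathrm{triv}}$.

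For the final assertion, choose an orthonormal basis $v_1,\dots,v_n$ of $V_\tau$ with $n = \dim(V_\tau)$. The map $\aB^n \to \aB \otimes V_\tau$, $(b_i) \mapsto \sum_i b_i \otimes v_i$, is then a unitary isomorphism of right Hilbert $\aB$-modules, since $\langle \sum_i a_i \otimes v_i, \sum_j b_j \otimes v_j\rangle = \sum_i a_i^* b_i$.

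The only point I expect to need care is the well-definedness of the transported isomorphism. One has to check that $\psi \otimes \id$ maps invariants to invariants, which follows from $Q$-equivariance. One also has to check that its component at the trivial representation is $\id_\aB$, which follows from $\psi(b)=b\otimes 1$. Everything else reduces to earlier results.
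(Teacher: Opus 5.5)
Your proposal is correct and follows essentially the same route as the paper. Both transport along the $Q$-equivariant isomorphism over $\aB$ from Corollary~\ref{cor:residual-trivial}, then identify the associated correspondences of the trivial system with $\aB \otimes V_\tau$ by evaluation at $e_Q$, naturally and monoidally. Your observation that this second step is Theorem~\ref{thm:induction-tensor-functor} applied to $\{e\} \subseteq Q$ with $\aC = \aB$ is a clean way to package the computation that the paper only sketches.
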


There is also a useful reformulation directly in terms of the original C\Star-dynamical system.
For each $\tau \in \rep(Q)$, the representation $\tau \circ q:G \to U(V_\tau)$ is trivial on $H$, and
\[
    \Gamma_{\aA^H}(\tau) = \Gamma_{\aA}(\tau \circ q).
\]
Indeed, since $\tau \circ q$ is trivial on $H$, invariance under the diagonal $G$-action forces the $\aA$-component to be $H$-invariant.
Consequently, the following necessary condition for reduction is obtained.

\begin{corollary}\label{cor:module-obstruction}
If $(\aA,G,\alpha)$ admits an $H$-reduction, then, for each $\tau \in \rep(Q)$, there exists a unitary isomorphism of $\aB$-correspondences
\[
    \Gamma_\aA(\tau\circ q) \cong \aB \otimes V_\tau,
\]
where the right-hand side carries the standard $\aB$-bimodule structure.
\end{corollary}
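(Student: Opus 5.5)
The plan is to combine Corollary~\ref{cor:residual-functor} with the identification $\Gamma_{\aA^H}(\tau) = \Gamma_\aA(\tau\circ q)$ stated just before the corollary. First I would check this identification carefully, including the correspondence structure. An element $x \in \aA \otimes V_\tau$ lies in $\Gamma_\aA(\tau\circ q)$ if and only if $(\alpha_g \otimes \tau_{q(g)})(x) = x$ for all $g \in G$. Taking $g = h \in H$ gives $(\alpha_h \otimes \id)(x) = x$, so $x \in \aA^H \otimes V_\tau$. On $\aA^H \otimes V_\tau$ the remaining condition is exactly $(\bar\alpha_{q(g)} \otimes \tau_{q(g)})(x) = x$. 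Hence the two spaces coincide as subsets of $\aA \otimes V_\tau$. Their left and right $\aB$-actions and $\aB$-valued inner products are both the restrictions of the structures on $\aA \otimes V_\tau$, so they agree as $\aB$-correspondences, not merely as sets.

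Next I would use Corollary~\ref{cor:residual-trivial} to fix the $Q$-equivariant isomorphism $\psi: \aA^H \to \aB \otimes C(Q)$ with $\psi(b) = b \otimes 1$. Then $\psi \otimes \id_{V_\tau}$ restricts to a unitary isomorphism of $\aB$-correspondences $\Gamma_{\aA^H}(\tau) \to \Gamma_{\aB \otimes C(Q)}(\tau)$. It is $\aB$-bilinear because $\psi$ is the identity on $\aB$. It preserves inner products because $\psi$ is a $*$-homomorphism that is the identity on $\aB$-valued inner products.

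The remaining step is to identify $\Gamma_{\aB\otimes C(Q)}(\tau)$, which consists of functions $F \in C(Q, \aB \otimes V_\tau)$ with $(\id \otimes \tau_k)F(pk) = F(p)$, with $\aB \otimes V_\tau$. I would argue exactly as in the proof of Theorem~\ref{thm:induction-tensor-functor}, with $H$ trivial. Evaluation at $e_Q$ is a bijection, with inverse $x \mapsto (p \mapsto (\id \otimes \tau_p^{-1})x)$. Since $\aB$ sits inside as constant functions and commutes with the $V_\tau$-leg, the left action becomes the standard one, $b\cdot(b'\otimes v) = bb'\otimes v$.

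The one point I expect to need care is unitarity. The inner product $\langle F,K\rangle$ is a priori a function in $\aB \otimes C(Q)$ that happens to be $Q$-invariant, and hence constant. Its value at $e_Q$ is $\langle F(e_Q), K(e_Q)\rangle$, which is the standard inner product on $\aB\otimes V_\tau$. Composing the three isomorphisms then gives the claimed unitary isomorphism $\Gamma_\aA(\tau\circ q) \cong \aB \otimes V_\tau$.
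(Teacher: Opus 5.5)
Your proposal is correct and follows the paper's route: the paper also combines the identification $\Gamma_{\aA^H}(\tau)=\Gamma_\aA(\tau\circ q)$, which holds because $\tau\circ q$ is trivial on $H$, with Corollary~\ref{cor:residual-trivial} and evaluation at the identity of $Q$, packaged as Corollary~\ref{cor:residual-functor}. You additionally spell out the transport along $\psi\otimes\id_{V_\tau}$ and the checks of bilinearity, unitarity and the standard left action, which the paper leaves implicit.
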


\begin{remark}
The tensor-functorial conclusion of Corollary~\ref{cor:residual-functor}
is stronger than the objectwise condition in Corollary~\ref{cor:module-obstruction}.
Indeed, isomorphisms $\Gamma_{\aA^H}(\tau) \cong \aB\otimes V_\tau$ of $\aB$-correspondences, chosen independently for each $\tau \in \rep(Q)$, need not be compatible with the tensor structure
\[
    \Gamma_{\aA^H}(\tau) \otimes_\aB \Gamma_{\aA^H}(\upsilon)
    \to
    \Gamma_{\aA^H}(\tau \otimes \upsilon)
\]
for all $\tau,\upsilon \in \rep(Q)$.

Moreover, even the existence of a unitary monoidal natural isomorphism
$\Gamma_{\aA^H} \cong \Gamma_{\mathrm{triv}}$ does not in general imply the existence of an $H$-reduction.
As the preceding example demonstrates, the additional centrality condition in Theorem~\ref{thm:normal-reduction} is essential.
\end{remark}

\begin{corollary}\label{cor:K-obstruction}
If $(\aA,G,\alpha)$ admits an $H$-reduction, then, for each $\tau \in \rep(Q)$,
\[
    [\Gamma_{\aA}(\tau\circ q)] = \dim(V_\tau)[\aB]
\]
in $K_0(\aB)$. 
Consequently, the existence of $\tau \in \rep(Q)$ for which
\[
    [\Gamma_{\aA}(\tau \circ q)] \neq \dim(V_\tau)[\aB]
\]
obstructs reduction to $H$.
\end{corollary}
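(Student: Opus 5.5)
The plan is to deduce the statement directly from Corollary~\ref{cor:module-obstruction}, by passing from unitary isomorphism classes of right Hilbert $\aB$-modules to classes in $K_0(\aB)$.

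Fix $\tau \in \rep(Q)$. Corollary~\ref{cor:module-obstruction} provides a unitary isomorphism of $\aB$-correspondences $\Gamma_\aA(\tau\circ q) \cong \aB \otimes V_\tau$. In particular, forgetting the left action, we obtain a unitary isomorphism of right Hilbert $\aB$-modules. Choosing an orthonormal basis of $V_\tau$ identifies $\aB \otimes V_\tau$ with the standard free module $\aB^{\dim(V_\tau)}$, as already noted in Corollary~\ref{cor:residual-functor}.

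Next we must check that $[\Gamma_\aA(\tau\circ q)]$ is a well-defined element of $K_0(\aB)$, that is, that $\Gamma_\aA(\tau\circ q)$ is a finitely generated projective right $\aB$-module. I would argue this from freeness, using the standard fact underlying the construction in \cite[Sec.~5]{SchWa17}: for a free C\Star-dynamical system, each associated module $\Gamma_\aA(\sigma)$ is finitely generated and projective over $\aB$. In the present situation this is in any case immediate from the isomorphism just obtained, since $\aB^{\dim(V_\tau)}$ is free of finite rank.

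Unitarily isomorphic finitely generated projective Hilbert modules correspond to Murray--von Neumann equivalent projections. Hence
\[
[\Gamma_\aA(\tau\circ q)] = [\aB^{\dim(V_\tau)}] = \dim(V_\tau)[\aB]
\]
in $K_0(\aB)$. The obstruction statement is then just the contrapositive. I expect no real obstacle here; the only point needing care is the identification of Hilbert-module isomorphism classes with $K_0$-classes, which is standard.
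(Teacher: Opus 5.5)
Your proposal is correct and follows essentially the route the paper intends. The paper gives no separate proof: the corollary is read off from Corollary~\ref{cor:module-obstruction} (equivalently Corollary~\ref{cor:residual-functor}) by forgetting the left action. The $K_0$-classes are well defined because the associated modules of a free action are finitely generated projective over $\aB$. For the obstruction statement, this general fact, and not the isomorphism available under the reduction hypothesis, is what makes $[\Gamma_\aA(\tau\circ q)]$ meaningful; you do invoke it as well.
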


Here the $K_0$-classes are well defined because the associated correspondences of a free compact-group action are finitely generated projective as right $\aB$-modules.

\begin{example}
Let $n \geq 2$, let $G := O(n)$, and let $H := SO(n)$. 
Consider the reflection
\[
    r := \operatorname{diag}(-1,1,\ldots,1) \in G.
\]
Then $K := \{1,r\} \cong \Z_2$ is a subgroup of $G$ and $O(n) = SO(n) \rtimes K$.

Endow $M_2(\C)$ with the $K$-action $\gamma$ determined by $\gamma_r := \operatorname{Ad}(u)$,
where
\[
    u := \begin{pmatrix}
            1&0\\
            0&-1
        \end{pmatrix}.
\]
Its fixed point algebra is
\[
    \aB := M_2(\C)^K
    =\left\{
        \begin{pmatrix}
            a & 0\\
            0 & b
        \end{pmatrix}:a,b \in \C
    \right\}
    \cong
    \C^2.
\]
The homogeneous components of the $K$-action are
\[
    M_2(\C)_0 = \operatorname{span}\{e_{11},e_{22}\},
    \qquad
    M_2(\C)_1 = \operatorname{span}\{e_{12},e_{21}\}.
\]
Since
$M_2(\C)_1 M_2(\C)_1 = M_2(\C)_0$, the C\Star-dynamical system $(M_2(\C),K,\gamma)$ is free.

Now let
\[
    \aA := \operatorname{Ind}_K^{O(n)}(M_2(\C)).
\]
By Proposition~\ref{prop:induction-free}, the C\Star-dynamical system $(\aA,O(n),\alpha)$ is free and $\aA^{O(n)} = \aB$.

The $SO(n)$-fixed point algebra of $\aA$ can be identified with $M_2(\C)$:
Indeed, since each $g \in O(n)$ can be written uniquely as $g = kh$ with $k \in K$ and $h \in SO(n)$, the covariance relation gives $f(kh) = \gamma_k(f(h))$ for all $k \in K$ and $h \in SO(n)$.
Hence each $f \in \aA$ is uniquely determined by its restriction to $SO(n)$.
Conversely, for each $f \in C(SO(n),M_2(\C))$, the formula $kh \mapsto \gamma_k(f(h))$
defines a unique element in $\aA$.
Thus restriction induces a \Star-isomorphism
\[
    \aA \cong C(SO(n),M_2(\C)).
\]
Under this identification, the restricted $SO(n)$-action is given by $(\alpha_hf)(h') = f(h'h)$ for all $h,h'\in SO(n)$.

If $(\aA,O(n),\alpha)$ admitted an $SO(n)$-reduction, then Corollary~\ref{cor:residual-trivial} would imply that the residual $\Z_2$-C\Star-dynamical system is $\Z_2$-equivariantly isomorphic over $\aB$ to
\[
    (\aB \otimes C(\Z_2),\Z_2,\operatorname{id}_{\aB} \otimes \rho).
\]
However, $\aB \otimes C(\Z_2) \cong \C^4$ is commutative, whereas $\aA^{SO(n)} \cong M_2(\C)$ is not.
It follows that $(\aA,O(n),\alpha)$ admits no $SO(n)$-reduction.

Notice that the $K$-theoretic obstruction of Corollary~\ref{cor:K-obstruction} does not detect this example: the nontrivial spectral
subspace is free of rank one as a right $\aB$-module.
However, as an $\aB$-correspondence, this spectral subspace is not isomorphic to $\aB$, since its left action exchanges the two minimal central projections of $\aB \cong \C^2$.
Thus the correspondence obstruction of Corollary~\ref{cor:module-obstruction} detects the failure of reduction, whereas the $K$-theoretic obstruction does not.
\end{example}

\section*{Open problems}

The results above suggest several directions for further study.

\subsection*{Noncommutative principal \texorpdfstring{$O(n)$}{O(n)}-bundles.}

Construct and analyze further examples of noncommutative principal $O(n)$-bundles, particularly examples for which the existence or nonexistence of an $SO(n)$-reduction can be determined through the associated tensor functor.
Of particular interest is the extent to which tensor-functorial obstructions can be made explicit.

\subsection*{Bundles with homogeneous fibres.}

Extend the present approach to bundles with homogeneous fibres.
An algebraic framework for such bundles, based on principal coalgebra extensions and cotensor products, was developed in~\cite{BrSz19}.
In the present C\Star-algebraic setting, for an arbitrary closed subgroup $H \subseteq G$, the inclusion $\aA^G \subseteq \aA^H$ is the natural analogue of the classical homogeneous bundle $P/H \to X$ with fibre $G/H$.
If $H$ is normal, the algebra $\aA^H$ carries the residual $G/H$-action considered above.
For non-normal subgroups, no quotient-group action is available, and it would be interesting to develop a C\Star-dynamical and tensor-functorial description of the corresponding homogeneous bundle and to relate it to the algebraic framework of~\cite{BrSz19}.

In particular, one may ask for a noncommutative analogue of the classical section criterion
\[
    P\text{ admits an }H\text{-reduction}
    \qquad
    \Longleftrightarrow
    \qquad
    P/H\to X\text{ admits a section}.
\]
Under Gelfand duality, a section corresponds to a \Star-homomorphism $C(P/H) \to C(X)$ splitting the inclusion $C(X) \to C(P/H)$.
In the noncommutative setting, the corresponding candidate is a unital \Star-homomorphism
\[
    r: \aA^H \to \aB,
    \qquad 
    r\vert_\aB = \id_\aB.
\]
As the preceding example demonstrate, the existence of such a splitting alone need not imply reduction.
For normal subgroups, Theorem~\ref{thm:normal-reduction} identifies an
additional centrality condition through an equivariant homomorphism from $C(G/H)$ into $Z(\aA)$.
For arbitrary closed subgroups, it remains interesting to determine which additional compatibility conditions on a splitting characterize reduction.

\enlargethispage{\baselineskip}
\bibliographystyle{abbrv}
\bibliography{short,RedNCG}

\begin{thebibliography}{10}

\bibitem{BaCoHa15}
P.~Baum, K.~D. Commer, and P.~M. Hajac.
\newblock Free actions of compact quantum groups on unital {C$^*$}-algebras.
\newblock {\em Doc. Math.}, 22:825--849, 2017.

\bibitem{BrSz19}
T.~Brzezi{\'n}ski and W.~Szyma{\'n}ski.
\newblock An algebraic framework for noncommutative bundles with homogeneous
  fibres.
\newblock {\em Algebra Number Theory}, 15(1):217--240, 2021.

\bibitem{Cac23}
B.~{\'C}a{\'c}i{\'c}.
\newblock Geometric foundations for classical {{\(\mathrm{U}(1)\)}}-gauge
  theory on noncommutative manifolds.
\newblock {\em Commun. Math. Phys.}, 405(9):90, 2024.
\newblock Id/No 209.

\bibitem{CacMes19}
B.~{\'C}a{\'c}i{\'c} and B.~Mesland.
\newblock Gauge theory on noncommutative {R}iemannian principal bundles.
\newblock {\em Comm. Math. Phys}, 388(1):107--198, 2021.

\bibitem{Ell00}
D.~A. Ellwood.
\newblock A new characterisation of principal actions.
\newblock {\em J. Func. Anal.}, 173(1):49--60, 2000.

\bibitem{FaLa24}
C.~Farsi and F.~Latr\'emoli\`ere.
\newblock Collapse in noncommutative geometry and spectral continuity.
\newblock preprint, arXiv:2404.00240v1, accepted for publication in J. London
  Math., 2026.

\bibitem{LaSu05}
G.~Landi and W.~v. Suijlekom.
\newblock Principal fibrations from noncommutative spheres.
\newblock {\em Comm. Math. Phys}, 260(1):203--225, 2005.

\bibitem{Ne13}
S.~Neshveyev.
\newblock Duality theory for nonergodic actions.
\newblock {\em M{\"u}nster J. Math.}, 7(2):414--437, 2013.

\bibitem{Phi87}
C.~N. Phillips.
\newblock {\em Equivariant {K}-Theory and Freeness of Group Actions on
  {C$^*$}-Algebras}, volume 1274 of {\em Lecture Notes in Math.}
\newblock Springer, 1987.

\bibitem{Rieffel91}
M.~A. Rieffel.
\newblock Proper actions of groups on {C$^*$}-algebras.
\newblock In {\em Mappings of Operator Algebras}, volume~84 of {\em Progr.
  Math.}, pages 141--182. Birkh{\"a}user, 1991.

\bibitem{SchWa17}
K.~Schwieger and S.~Wagner.
\newblock {Part III}, {Free} actions of compact quantum groups on
  {C$^*$}-algebras.
\newblock {\em SIGMA}, 13(062):19 pages, 2017.

\bibitem{SchWa20}
K.~Schwieger and S.~Wagner.
\newblock Lifting spectral triples to noncommutative principal bundles.
\newblock {\em Adv. Math.}, 396:108160, 2022.

\bibitem{SchWa24}
K.~Schwieger and S.~Wagner.
\newblock Realizations of free actions via their fixed point algebras.
\newblock {\em Canad. Math. Bull.}, page 1–14, 2025.

\bibitem{Du95}
M.~{\DJ}ur{\dj}evi{\'c}.
\newblock Quantum principal bundles and {T}annaka-{K}rein duality theory.
\newblock In {\em Proceedings of the {XXVIII} {S}ymposium on {M}athematical
  {P}hysics ({T}oru\'n, 1995)}, volume~38, pages 313--324, 1996.

\bibitem{Wa23}
S.~Wagner.
\newblock On noncommutative frame bundles.
\newblock {\em New York J. Math.}, 31:837--856, 2025.

\bibitem{Wa25}
S.~Wagner.
\newblock Noncommutative principal bundles and central extensions.
\newblock arXiv:2509.02263v1, accepted for publication in Int. Math. Res. Not.
  IMRN, 2026.

\end{thebibliography}

\end{document}